\documentclass[11pt]{article}

\usepackage[dvips]{graphicx}
\usepackage{color}
\usepackage{amsmath}
\usepackage{amsfonts}
\usepackage{amssymb}
\usepackage{amsthm}
\usepackage{newlfont}
\usepackage{epsfig}
\usepackage{multirow}
\usepackage{setspace}
\usepackage{hyperref}
\usepackage{cite}

\begin{document}

\newtheorem{assumption}{Assumption}[section]
\newtheorem{definition}{Definition}[section]
\newtheorem{lemma}{Lemma}[section]
\newtheorem{proposition}{Proposition}[section]
\newtheorem{theorem}{Theorem}[section]
\newtheorem{corollary}{Corollary}[section]
\newtheorem{remark}{Remark}[section]

\title{Weak Dynamic Non-Emptiability and Persistence of Chemical Kinetics Systems}

\author{Matthew D. Johnston\thanks{Supported by a Natural Sciences and Engineering Research Council of Canada Post-Graduate Scholarship} and David Siegel\thanks{Supported by a Natural Sciences and Engineering Research Council of Canada Research Grant \newline \textbf{Keywords:} chemical kinetics; dynamically non-emptiable; siphons; complex balanced; facets \newline \textbf{AMS
  Subject Classifications:} 80A30, 34D20, 37C75.}
  \vspace*{.2in} \\
Department of Applied Mathematics, University of Waterloo, \\
Waterloo, Ontario, Canada N2L 3G1 }
\date{}
\maketitle

\bigskip

\begin{abstract}

A frequently desirable characteristic of chemical kinetics systems is that of persistence, the property that if all the species are initially present then none of them may tend toward extinction. It is known that solutions of deterministically modelled mass-action systems may only approach portions of the boundary of the positive orthant which correspond to semi-locking sets (alternatively called siphons). Consequently, most recent work on persistence of these systems has been focused on these sets.

In this paper, we focus on a result which states that, for a conservative mass-action system, persistence holds if every critical semi-locking set is dynamically non-emptiable and the system contains no nested locking sets. We will generalize this result by introducing the notion of a weakly dynamically non-emptiable semi-locking set and making novel use of the well-known Farkas' Lemma. We will also connect this result to known results regarding complex balanced systems and systems with facets.

\end{abstract}

\bigskip

\section{Introduction}

An elementary chemical reaction is given by a set of reactant species reacting at a prescribed rate to form a set of product species. Under appropriate assumptions (well-mixing, constant external conditions, large number of reacting molecules, etc.) such systems can be modelled deterministically by an autonomous set of ordinary differential equations over continuous variables representing the concentrations of the reactant species. The resulting mathematical models have a long history and enjoy applications in fields such as systems biology, industrial chemistry, atmospherics, etc. \cite{B-B1,F1,F2,H,H-J1}

One topic which has gained significant attention recently has been that of persistence \cite{A,A-S,A3,C-D-S-S,S-C,S-M}. A chemical kinetics system is said to be \emph{persistent} if no initially present species may tend toward extinction. In general, determining whether a system is persistent can be difficult, but it has been significantly simplified by recent work. In particular, in \cite{A3} the authors show that the boundary of the positive orthant can be divided into subsets $L_I$ (roughly faces of $\mathbb{R}_{>0}^m$) such that trajectories may only approach $L_I$ if $I \subseteq \left\{ 1, \ldots, m \right\}$ is a \emph{siphon} (called \emph{semi-locking sets} in this paper). Consequently, in order to determine persistence it is sufficient to consider only the behaviour near sets $L_I$ corresponding to sets $I$ which are semi-locking sets.

In the same paper, the authors introduce a condition on the sets $I$ called \emph{dynamical non-emptiability}. This condition corresponds to a dominance ordering of the reactant monomials which compose the differential equations. Together with some technical assumptions, they prove that the dynamical non-emptiability of every critical semi-locking set is sufficient to guarantee persistence. While certainly powerful, the result is limited by its restrictive assumptions, which include that the system be conservative and have no nested critical deadlocks (Theorem 4, \cite{A3}). In this paper, we extend this result by introducing the notion of \emph{weak dynamical non-emptiability}. Our methodology differs significantly from that used in \cite{A3} and will allow us to relax several assumptions while maintaining persistence. Our methodology depends crucially on a novel use of the famous Farkas' Lemma \cite{F}.

For many systems, proving persistence is tantamount to proving the global asymptotic stability of some positive equilibrium state, a characteristic which is typically highly desirable. One such class of systems are the \emph{complex balanced systems} first considered in \cite{F1,H,H-J1}. In \cite{H-J1}, the authors proved that relative to each stoichiometric compatibility class, the invariant spaces in which solutions are restricted, there is precisely one positive equilibrium state and that this state is locally asymptotically stable.

Despite significant empirical support, however, the hypothesis that this equilibrium state is in fact \emph{globally} asymptotically stable relative to its compatibility class remains unproven. In the literature, this hypothesis has been termed the \emph{Global Attractor Conjecture} and a significant amount of work has been done attempting to confirm it \cite{A,A3,C-D-S-S,S-C,S-M}. Of particular interest to us is the work of \cite{S-M}, where the authors prove that the $\omega$-limit set consists of either this unique positive equilibrium state or of some complex balanced equilibria on the boundary of the positive orthant. Consequently, for complex balanced systems, persistence is a sufficient condition for the global asymptotic stability of this equilibrium state relative to its stoichiometric compatibility class. It follows that the work of this paper affirms the \emph{Global Attractor Conjecture} for chemical kinetics systems satisfying the relevant assumptions which are also complex balanced.

Persistence is also considered in \cite{A-S} where the authors prove that mass-action systems are persistent if every semi-locking set $I$ corresponds to a facet of the compatibility class (i.e. a facet is a codimension one face of the compatibility class). The authors connect their work with \cite{A3} by showing that sets $I$ corresponding to facets are dynamically non-emptiable under some assumptions of the system. We will prove that there is no restriction for weak dynamical non-emptiability; that, in fact, every set $I$ corresponding to a facet is weakly dynamically non-emptiable.

It should be noted that our notation and terminology will often differ from that of \cite{A3}. We have attempted, wherever possible, to adopt terminology specific to chemical kinetics systems (for example, we follow \cite{A} in naming siphons according to their chemical kinetics equivalent of semi-locking sets). Throughout this paper, we will let $\mathbb{R}_{>0}^m$ and $\mathbb{R}_{\geq 0}^m$ denote the $m$-dimensional spaces with all coordinates strictly positive and non-negative, respectively.

\section{Background}

Within the mathematical literature, several distinct ways to represent chemical reactions have been proposed. In this paper, we will follow closely the \emph{reaction}-oriented formulation of \cite{A3}. (For examples of \emph{species}- and \emph{complex}-oriented formulations, see \cite{V-H} and \cite{H-J1}, respectively.)

We will let $\mathcal{A}_j$ denote the \emph{species} or \emph{reactants} of the system and let $\mathcal{S}$ denote the set of distinct species of the system. We define $|\mathcal{S}| = m$. The reactions of the system will be represented as 
\begin{equation}
\label{reactionnetwork}
\mathcal{R}_i: \; \; \; \; \; \sum_{j=1}^m \alpha_{ij} \mathcal{A}_j \; \stackrel{k_i}{\longrightarrow} \; \sum_{j=1}^m \beta_{ij} \mathcal{A}_j
\end{equation}
where $\alpha_{ij}$, $\beta_{ij} \in \mathbb{Z}_{\geq 0}$ are called the \emph{stoichiometric coefficients} and $k_i > 0$ is the \emph{rate constant} associated with the $i^{th}$ reaction. We will let $\mathcal{R}$ denote the set of reactions of the system and define $|\mathcal{R}|=r$.

\subsection{Mass-Action Kinetics}

It is convenient to collect the net gain or loss of each species as a result of a specified reaction in a matrix $\Gamma \in \mathbb{Z}^{m \times r}$ where the entries of $\Gamma$ are defined according to
\begin{equation}
\label{gamma}
[\Gamma]_{ji} = \beta_{ij}-\alpha_{ij}
\end{equation}
for $i=1, \ldots, r$ and $j = 1, \ldots, m$ (notice the reversal of indices). The entries $[\Gamma]_{ji}$ can be intuitively interpreted as the change in the $j^{th}$ species as a result of each instance of the $i^{th}$ reaction.

We will let $x_j=[\mathcal{A}_j]$ denote the concentration of the $j^{th}$ species and denote by $\mathbf{x} = [ x_1 \; x_2 \; \cdots \; x_m]^T$ the \emph{concentration vector}. We define the \emph{reaction vector} $R(\mathbf{x}) \in \mathbb{R}_{\geq 0}^r$ according to
\begin{equation}
\label{reactionvector}
R_i(\mathbf{x}) = k_i \prod_{j=1}^m x_j^{\alpha_{ij}}
\end{equation}
for $i = 1, \ldots, r$. The terms $R_i(\mathbf{x})$ correspond to the \emph{reaction rates} of chemical reactions under the assumption of mass-action kinetics. (For further discussion on alternatives to mass-action kinetics, see \cite{A3}.)

Since $[\Gamma]_{ji}$ represents the net stoichiometric change in the $j^{th}$ species as a result of the $i^{th}$ reaction and $R_i(\mathbf{x})$ represents the rate of occurrence of the $i^{th}$ reaction, it follows that $[\Gamma]_{ji} R_i(\mathbf{x})$ is the rate of change of the $j^{th}$ species as a result of the $i^{th}$ reaction. It follows that (\ref{reactionnetwork}) is governed by the system of differential equations
\begin{equation}
\label{de}
\frac{d\mathbf{x}}{dt} = \mathbf{f}(\mathbf{x}) = \Gamma R(\mathbf{x}).
\end{equation}

It follows from (\ref{de}) that solutions are not able to wander around freely in $\mathbb{R}^m$ since $\mathbf{f}(\mathbf{x}) \in$ range$(\Gamma)$ for all $\mathbf{x} \in \mathbb{R}_{\geq 0}^m$. Trajectories are in fact restricted to \emph{stoichiometric compatibility classes} (see \cite{H-J1}).

\begin{definition}
\label{stoic}
The \textbf{stoichiometric subspace} for a chemical reaction mechanism (\ref{reactionnetwork}) is the linear subspace $S \subseteq \mathbb{R}^m$ given by
\[S= \mbox{\emph{range}}(\Gamma).\]
\noindent The dimension of the stoichiometric subspace will be denoted by $s =$ \emph{rank}$(\Gamma)$.
\end{definition}

\begin{definition}
The positive \textbf{stoichiometric compatibility class} containing the initial concentration $\mathbf{x}_0 \in \mathbb{R}^m_{>0}$ is the set $\mathsf{C}_{\mathbf{x}_0} = (\mathbf{x}_0 + S) \cap \mathbb{R}^m_{>0}$.
\end{definition}

\begin{proposition} [\cite{H-J1,V-H}]
\label{proposition2}
Let $\mathbf{x}(t)$ be the solution to (\ref{de}) with $\mathbf{x}(0)=\mathbf{x}_0 \in \mathbb{R}^m_{>0}$. Then $\mathbf{x}(t) \in \mathsf{C}_{\mathbf{x}_0}$ for $t \geq 0$.
\end{proposition}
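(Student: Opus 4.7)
The plan is to establish the proposition by separately verifying the two conditions implicit in the definition of $\mathsf{C}_{\mathbf{x}_0} = (\mathbf{x}_0 + S) \cap \mathbb{R}_{>0}^m$: first, that $\mathbf{x}(t) - \mathbf{x}_0$ lies in the stoichiometric subspace $S = \mathrm{range}(\Gamma)$ for all $t \geq 0$, and second, that $\mathbf{x}(t)$ remains strictly positive in every coordinate. Local existence and uniqueness of $\mathbf{x}(t)$ is immediate since the vector field $\mathbf{f}(\mathbf{x}) = \Gamma R(\mathbf{x})$ is polynomial, hence locally Lipschitz.

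For the affine-subspace part, I would let $\mathbf{v}$ be an arbitrary element of $S^\perp$ and compute
$$\frac{d}{dt}\bigl(\mathbf{v} \cdot \mathbf{x}(t)\bigr) = \mathbf{v} \cdot \Gamma R(\mathbf{x}(t)) = 0,$$
since $\Gamma R(\mathbf{x}(t)) \in S$. Hence $\mathbf{v} \cdot (\mathbf{x}(t) - \mathbf{x}_0) = 0$ for every $\mathbf{v} \in S^\perp$, which gives $\mathbf{x}(t) - \mathbf{x}_0 \in (S^\perp)^\perp = S$. For the positivity part, the key structural observation is that for each index $j$ one may decompose
$$f_j(\mathbf{x}) = \sum_{i=1}^r \beta_{ij} R_i(\mathbf{x}) \;-\; \sum_{i \,:\, \alpha_{ij} \geq 1} \alpha_{ij} R_i(\mathbf{x}),$$
where every term in the negative sum carries the factor $x_j^{\alpha_{ij}}$ with $\alpha_{ij} \geq 1$. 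Thus whenever $x_j = 0$ the consuming terms vanish while the production terms are non-negative, so $f_j$ is non-negative on the face $\{x_j = 0\}$. This is the standard tangent-cone condition ensuring positive invariance of the closed orthant $\mathbb{R}_{\geq 0}^m$. To upgrade this to strict positivity, I would observe that on any compact subset of $\mathbb{R}_{\geq 0}^m$ the negative part admits a bound $\sum_{i \,:\, \alpha_{ij} \geq 1} \alpha_{ij} R_i(\mathbf{x}) \leq M_j\, x_j$ (each such $R_i(\mathbf{x})$ is a monomial with a $x_j$ factor), yielding the differential inequality $\dot{x}_j \geq -M_j x_j$. A Gr\"onwall comparison then gives $x_j(t) \geq x_j(0)\, e^{-M_j t} > 0$ on the interval of existence.

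The main obstacle is the promotion of non-negativity to strict positivity: the tangent-cone argument alone only prevents the trajectory from crossing a coordinate hyperplane transversally, and one needs the Gr\"onwall step (together with a uniform bound on $\mathbf{x}(t)$ over compact time intervals, which follows since $f$ is polynomial and the trajectory stays in the closed orthant) to preclude tangential approach to the boundary in finite time. Once strict positivity on every bounded time interval is established, standard continuation ensures the solution is defined for all $t \geq 0$ within $\mathsf{C}_{\mathbf{x}_0}$.
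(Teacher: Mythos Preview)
The paper does not supply its own proof of this proposition; it is quoted as a known result with citations to Horn--Jackson and Vol'pert--Hudjaev. Your argument is essentially the standard one found in those references: containment in $\mathbf{x}_0 + S$ via constancy of $\mathbf{v}\cdot\mathbf{x}(t)$ for $\mathbf{v}\in S^\perp$, and forward invariance of the open orthant via the structural fact that every consumption term in $f_j$ carries a factor $x_j$, followed by a Gr\"onwall bound $x_j(t)\geq x_j(0)e^{-M_j t}$ on compact subintervals of the maximal interval of existence. That part is correct.

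The one genuine gap is your last sentence. Strict positivity on bounded time intervals does \emph{not} imply global existence via ``standard continuation'': mass-action systems may blow up in finite time while remaining strictly positive (take $2\mathcal{A}_1 \to 3\mathcal{A}_1$, i.e.\ $\dot x_1 = k x_1^2$). Continuation requires the solution to remain in a compact set, and positivity gives no upper bound. The proposition should therefore be read as asserting $\mathbf{x}(t)\in\mathsf{C}_{\mathbf{x}_0}$ on the maximal forward interval of existence; global existence for all $t\geq 0$ needs an additional hypothesis (conservativity, complex balancing, or some other a priori bound), which is precisely what the rest of the paper supplies when needed. Drop or qualify that final claim and your argument is complete.
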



A significant amount of literature exists analysing and restricting the behaviour of solutions of (\ref{de}) \cite{A,B-B1,C-D-S-S,F1,H-J1,S-C,S-M,V-H}. We will omit most of this discussion, instead focusing on the analysis conducted in \cite{A3} and the related paper \cite{A}.

\subsection{Conservative Systems and Semi-Locking Sets}

In this section, we introduce several background concepts from \cite{A3} which will be important to us. We have adapted some of their terminology to the more specific framework of chemical kinetics.

We start by defining a \emph{conservative} system.

\begin{definition}
\label{conservative}
A chemical kinetics system is said to be \textbf{conservative} if there exists a $\mathbf{c} \in \mathbb{R}_{>0}^m$ such that
\begin{equation}
\label{0001}
\mathbf{c}^T \Gamma = \mathbf{0}^T.
\end{equation}
We will call any vector $\mathbf{c} \in \mathbb{R}_{>0}^m$ satisfying (\ref{0001}) a \textbf{conservation vector}.
\end{definition}

An important property of conservative systems is that solutions of (\ref{de}) with $\mathbf{x}_0 \in \mathbb{R}_{>0}^m$ are bounded and do not approach the origin. This follows from (\ref{de}) and (\ref{0001}) since these collectively imply $\frac{d}{dt} \left[ \mathbf{c}^T \mathbf{x}(t) \right] = 0$ from which it follows that $\mathbf{c}^T \mathbf{x}(T) = \mathbf{c}^T \mathbf{x}(0) > 0$ for all $T > 0$. Divergence of any species to infinity, however, implies the existence of a sequence $\left\{ t_n \right\}$ such that $\mathbf{c}^T \mathbf{x}(t_n) \to \infty$ as $n \to \infty$, while convergence to the origin implies a sequence such that $\mathbf{c}^T \mathbf{x}(t_n) \to 0$ as $n \to \infty$.

Related to the concept of a \emph{conservation vector} is the concept of a \emph{semi-conservation vector}. (These are called \emph{P-semiflows} in \cite{A3}. We introduce the term \emph{semi-conservation} to emphasize the connection with chemical kinetics.)

\begin{definition}
\label{semiconservative}
Consider a nonempty set $I \subseteq \left\{ 1, 2, \ldots, m \right\}$. A chemical kinetics system is said to be \textbf{conservative with respect to} $I$ if there exists a $\mathbf{c} \in \mathbb{R}_{\geq 0}^m$ satisfying
\begin{equation}
\label{0002}
\mathbf{c} = \left\{ \begin{array}{ll} c_i > 0, \hspace{0.3in} & i \in I \\ c_i = 0, & i \not\in I\end{array} \right.
\end{equation}
so that
\begin{equation}
\label{0003}
\mathbf{c}^T \Gamma = \mathbf{0}^T.
\end{equation}
We will call any vector $\mathbf{c} \in \mathbb{R}_{\geq 0}^m$ satisfying (\ref{0002}) and (\ref{0003}) a \textbf{semi-conservation vector}.
\end{definition}

Many of the results of this paper will depend on restricting our attention to only those components $x_i$ of $\mathbf{x} \in \mathbb{R}^m$ such that $i \in I$ for some set $I \subseteq \left\{1, \ldots, m \right\}$. When $I$ corresponds to only those indices for which $x_i \not= 0$ we will call $I$ the \emph{support} of $\mathbf{x}$.



The following concepts are the central focus of the papers \cite{A} and \cite{A3}. They will be key to the analysis conducted in the rest of this paper.
\begin{definition}
\label{semilockingset}
The nonempty index set $I \subseteq \left\{ 1, 2, \ldots, m \right\}$ is called a \textbf{semi-locking set} (alternatively, a \textbf{siphon}) if, for every reaction where an element $\mathcal{A}_i$, $i \in I$, appears in the product complex, an element from $\mathcal{A}_j$, $j \in I$ appears in the reactant complex. A semi-locking set $I$ is called a \textbf{locking set} (alternatively, a \textbf{deadlock}) if every reaction has an element $\mathcal{A}_i$, $i \in I$, which appears in the reactant complex.

\end{definition}

It is worth noting that the element $\mathcal{A}_i$ appearing in the product may be the same as the element $\mathcal{A}_j$ appearing in the reactant. The concepts of siphons and deadlocks are based on the theory of Petri Nets introduced in \cite{Pe} and used extensively in \cite{A3}. We will, however, use the chemical reaction specific terminology of semi-locking and locking sets used in \cite{A}.

There are several intuitive consequences of semi-locking and locking sets. Since any species on the reactant side of a reaction being absent (i.e. $x_i=0$ for this species) implies by (\ref{reactionvector}) that the rate of the reaction is zero, if all the species in a semi-locking set are absent then no species in the set will ever be produced  (i.e. $x_i(t)=0$ for all $t \geq 0$ for all species in the set). In this sense, the set of species is locked in place. In an analogous way, for a locking set all of the species are locked in place.

The authors of \cite{A3} further classify semi-locking sets according to the following definition.

\begin{definition}
\label{critical}
A semi-locking set $I \subseteq \left\{ 1, 2, \ldots, m \right\}$ will be called \textbf{critical} if it does not contain the support of a semi-conservation vector.
\end{definition}


\subsection{Persistence and Faces}
\label{persistencesection}

The primary goal of \cite{A3} is guaranteeing that solutions of (\ref{de}) do not approach $\partial \mathbb{R}_{>0}^m$ for any $\mathbf{x}_0 \in \mathbb{R}_{>0}^m$. In other words, the authors sought to guarantee that if every species is initially present then none of them could not tend toward extinction as a result of the reaction system.

The concept is succinctly stated as follows.

\begin{definition}
\label{persistence}
A chemical reaction network with bounded trajectories is said to be \textbf{persistent} if, for any $\mathbf{x}_0 \in \mathbb{R}_{>0}^m$, it follows that $\omega(\mathbf{x}_0) \cap \partial \mathbb{R}_{>0}^m = \emptyset$.
\end{definition}

Persistence has historical roots in predator-prey modeling in biology but has been frequently applied to chemical reaction mechanisms in recent years. We require trajectories be bounded to avoid ambiguous limiting behaviour.

In order to make the concept of persistence more manageable, we divide the boundaries of $\mathbb{R}_{>0}^m$ and $\mathsf{C}_{\mathbf{x}_0}$ into faces. For technical reasons we will only be interested in the \emph{relative interior} of these faces, which we will define as follows. (These sets are defined similarly in \cite{A}, \cite{A3}, and \cite{C-D-S-S}. In \cite{A-S}, $L_I$ is denoted $Z_I$.)

\begin{definition}
\label{face}
Given a nonempty index set $I \subseteq \left\{ 1, 2, \ldots, m \right\}$, we define the sets $L_I$ and $F_I$ to be
\[\begin{array}{c} \displaystyle{L_I = \left\{ \mathbf{x} \in \mathbb{R}_{\geq 0}^m \; | \; x_i = 0 \mbox{ for } i \in I \mbox{ and } x_i > 0 \mbox{ for } i \not\in I \right\}}\\
\displaystyle{F_I = \left\{ \mathbf{x} \in \overline{\mathsf{C}}_{\mathbf{x}_0} \; | \; x_i = 0 \mbox{ for } i \in I \mbox{ and } x_i > 0 \mbox{ for } i \not\in I \right\}}\end{array}\]
\end{definition}
The set $F_I$ can also be given as $F_I = \overline{\mathsf{C}}_{\mathbf{x}_0} \cap L_I$ or $F_I = (\mathbf{x}_0 + S) \cap L_I$.

We notice that each $\mathbf{x} \in \partial \mathbb{R}_{>0}^m$ can be placed into exactly one set $L_I$ and each $\mathbf{x} \in \partial \mathsf{C}_{\mathbf{x}_0}$ can be placed into exactly one set $F_I$ so that these sets uniquely and completely decompose $\partial \mathbb{R}_{>0}^m$ and $\partial \mathsf{C}_{\mathbf{x}_0}$, respectively. Not every $I$, however, necessarily corresponds to a $F_I$ which contains points, i.e. it is possible that $F_I = \emptyset$. Such sets will be said to be \emph{stoichiometrically unattainable}.

The following result corresponds to Proposition 1 of \cite{A3} and Theorem 2.5 of \cite{A}. This result places strong limitations on where $\omega$-limit points may lie on $\partial \mathbb{R}_{>0}^m$.
\begin{lemma}
\label{lemma461}
Consider a mass-action system and a nonempty index set $I \subseteq \{1, 2, \ldots, m\}$. If there exists a $\mathbf{x}_0 \in \mathbb{R}_{>0}^m$ such that $\omega(\mathbf{x}_0) \cap L_I \not= \emptyset$, then $I$ is a semi-locking set.
\end{lemma}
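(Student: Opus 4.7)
The plan is to prove the contrapositive: if $I$ is not a semi-locking set then $\omega(\mathbf{x}_0) \cap L_I = \emptyset$ for every $\mathbf{x}_0 \in \mathbb{R}_{>0}^m$. Negating Definition \ref{semilockingset}, there must exist a reaction $\mathcal{R}_\ell$ and an index $i \in I$ with $\beta_{\ell i} > 0$ and $\alpha_{\ell j} = 0$ for every $j \in I$ (so $\mathcal{R}_\ell$ produces $\mathcal{A}_i$ but consumes nothing from $I$). This combinatorial witness will drive the entire argument.

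First I would compute $f_i(\mathbf{x}^*)$ for an arbitrary $\mathbf{x}^* \in L_I$ by splitting (\ref{de}) reaction by reaction. Any reaction $\mathcal{R}_m$ whose reactant complex contains some $\mathcal{A}_j$, $j \in I$, contributes nothing because $R_m(\mathbf{x}^*) = 0$ by (\ref{reactionvector}); the remaining reactions have $\alpha_{mj} = 0$ for every $j \in I$, forcing $R_m(\mathbf{x}^*) > 0$ (since $x^*_j > 0$ for $j \not\in I$) and $[\Gamma]_{im} = \beta_{mi} - \alpha_{mi} = \beta_{mi} \geq 0$ (using $i \in I$ to conclude $\alpha_{mi} = 0$). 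Summing the contributions gives $f_i(\mathbf{x}^*) \geq \beta_{\ell i} R_\ell(\mathbf{x}^*) > 0$.

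The key step then uses local reversibility of the flow. Since $\mathbf{f}$ is polynomial on all of $\mathbb{R}^m$, the flow $\phi_t$ generated by (\ref{de}) is a local diffeomorphism; in particular there is a neighborhood $N \subseteq \mathbb{R}^m$ of $\mathbf{x}^*$ and $\tau_0 > 0$ so that $\phi_{-\tau}$ is defined and continuous on $N$ for $\tau \in [0, \tau_0]$. Writing $\mathbf{y}(t) := \phi_t(\mathbf{x}^*)$, a first-order Taylor expansion yields $y_i(-\tau) = -\tau f_i(\mathbf{x}^*) + O(\tau^2) < 0$ for sufficiently small $\tau > 0$. Now suppose, for contradiction, that $\mathbf{x}^* \in \omega(\mathbf{x}_0)$ and pick a sequence $t_n \to \infty$ with $\mathbf{x}(t_n) \to \mathbf{x}^*$. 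For $n$ large, $\mathbf{x}(t_n) \in N$ and $t_n > \tau$, so uniqueness of solutions to (\ref{de}) gives $\mathbf{x}(t_n - \tau) = \phi_{-\tau}(\mathbf{x}(t_n)) \to \phi_{-\tau}(\mathbf{x}^*) = \mathbf{y}(-\tau)$, and in particular $x_i(t_n - \tau) \to y_i(-\tau) < 0$. This contradicts the forward invariance of $\mathbb{R}_{\geq 0}^m$ under (\ref{de}), which forces $x_i(t_n - \tau) \geq 0$.

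The hard part will be justifying the backward-flow step at the boundary point $\mathbf{x}^*$: because $\mathbf{x}^* \in L_I \subseteq \partial \mathbb{R}_{\geq 0}^m$, the backward orbit from $\mathbf{x}^*$ itself exits $\mathbb{R}_{\geq 0}^m$, so one has to be careful that no part of the argument requires the flow to remain in the non-negative orthant. The saving point is that $\mathbf{f}$ extends smoothly to all of $\mathbb{R}^m$, so $\phi_{-\tau}$ is genuinely a continuous local inverse on a full Euclidean neighborhood of $\mathbf{x}^*$; the identity $\mathbf{x}(t_n - \tau) = \phi_{-\tau}(\mathbf{x}(t_n))$ involves only interior points of the orbit and is therefore valid even though the limit $\mathbf{y}(-\tau)$ lies outside $\mathbb{R}_{\geq 0}^m$, and it is precisely this escape from the orthant that delivers the contradiction.
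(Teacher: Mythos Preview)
The paper does not give its own proof of Lemma~\ref{lemma461}; it merely cites Proposition~1 of \cite{A3} and Theorem~2.5 of \cite{A}. Your argument is correct and is essentially the standard one found in those references: the negation of the semi-locking property yields a reaction $\mathcal{R}_\ell$ that forces $f_i(\mathbf{x}^*)>0$ at any $\mathbf{x}^*\in L_I$, and then invariance of $\omega(\mathbf{x}_0)$ together with $\omega(\mathbf{x}_0)\subseteq\mathbb{R}_{\geq 0}^m$ gives a contradiction. Your version phrases the last step via continuous dependence of the backward flow rather than invoking invariance of $\omega(\mathbf{x}_0)$ as a black box, but this is the same mechanism unpacked, and you are right to flag that the smooth extension of $\mathbf{f}$ to all of $\mathbb{R}^m$ is what makes $\phi_{-\tau}$ legitimately defined near the boundary point. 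One cosmetic issue: you reuse $m$ as a reaction index while it is already the number of species; pick a different letter there.
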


An important consequence of Lemma \ref{lemma461} is that, in order to prove the persistence of a mass-action system, it is now sufficient to prove only that $\omega(\mathbf{x}_0) \cap L_I = \emptyset$ for all sets $L_I$ corresponding to semi-locking sets $I$. In practice, this is a significant simplification.

\subsection{Dynamical Non-Emptiability}

In Section \ref{persistencesection} we saw that, in order to determine persistence of chemical kinetics systems, it is sufficient to consider behaviour near the sets $L_I$ corresponding to semi-locking sets. To this end, Angeli \emph{et al.} introduced several new concepts, including a notion of partial ordering on reaction rates and two cones: the \emph{feasibility cone} and the \emph{criticality cone}. These ideas culminate in the concept of a semi-locking set being \emph{dynamically non-emptiable}. Everything in this section can be found in \cite{A3}.

We start by defining a partial ordering condition on the reactions of a system.

\begin{definition}
\label{partialorder}
Consider the nonempty index set $I \subseteq \left\{ 1, 2, \ldots, m \right\}$. For $\mathcal{R}_i, \mathcal{R}_j \in \mathcal{R}$, we will say that $\mathcal{R}_i \curlyeqprec_I \mathcal{R}_j$ if $\alpha_{ik} \geq \alpha_{jk}$ for all $k \in I$ and the inequality is strict for at least one $k \in I$.
\end{definition}

Intuitively, the partial ordering condition given in Definition \ref{partialorder} gives us an estimate on the magnitudes of the reaction terms $R_i(\mathbf{x})$ near a set $L_I$. This is made explicit by the following result.

\begin{lemma}[Lemma 4, \cite{A3}]
\label{lemma4}
Consider a mass-action system and let $I \subseteq \left\{ 1, 2, \ldots, m \right\}$ be a semi-locking set. Suppose that $\mathcal{R}_i \curlyeqprec_I \mathcal{R}_j$. Then, for every $\epsilon > 0$, and each compact subset $K$ of $L_I$, there exists a neighbourhood $U$ of $K$ in $\mathbb{R}_{>0}^m$ such that $R_i(\mathbf{x}) \leq \epsilon R_j(\mathbf{x})$ for all $\mathbf{x} \in U$.
\end{lemma}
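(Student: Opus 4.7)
The plan is to analyze the ratio $R_i(\mathbf{x})/R_j(\mathbf{x})$, which is well-defined on $\mathbb{R}_{>0}^m$ since every coordinate is strictly positive there. From the mass-action formula (\ref{reactionvector}), this ratio splits as
\[
\frac{R_i(\mathbf{x})}{R_j(\mathbf{x})} = \frac{k_i}{k_j} \left(\prod_{l \in I} x_l^{\alpha_{il}-\alpha_{jl}}\right) \left(\prod_{l \notin I} x_l^{\alpha_{il}-\alpha_{jl}}\right),
\]
and my strategy is to estimate these two products separately on a suitable neighbourhood of $K$.

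For the product over $l \notin I$, the exponents may be of either sign, but on $K \subseteq L_I$ we have $x_l > 0$ for every $l \notin I$, and compactness of $K$ provides uniform bounds $0 < m_1 \leq x_l \leq m_2$ on $K$ for all such $l$. Because each factor $x_l^{\alpha_{il}-\alpha_{jl}}$ is continuous wherever $x_l > 0$, a standard continuity/compactness argument yields an open set $U_1 \subseteq \mathbb{R}^m$ containing $K$ and a constant $M > 0$ such that $\prod_{l \notin I} x_l^{\alpha_{il}-\alpha_{jl}} \leq M$ throughout $U_1 \cap \mathbb{R}_{>0}^m$.

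For the product over $l \in I$, the hypothesis $\mathcal{R}_i \curlyeqprec_I \mathcal{R}_j$ forces every exponent to be non-negative and at least one to be strictly positive, so $N := \sum_{l \in I}(\alpha_{il}-\alpha_{jl}) \geq 1$. Since $x_l = 0$ on $K$ for all $l \in I$, for any prescribed $\delta \in (0,1]$ continuity supplies an open neighbourhood $U_2$ of $K$ in $\mathbb{R}^m$ on which $x_l < \delta$ for every $l \in I$; hence $\prod_{l \in I} x_l^{\alpha_{il}-\alpha_{jl}} \leq \delta^N \leq \delta$ on $U_2 \cap \mathbb{R}_{>0}^m$.

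Combining the two bounds on $U := U_1 \cap U_2$ gives $R_i(\mathbf{x}) \leq (k_i/k_j)\, M\, \delta\, R_j(\mathbf{x})$ for all $\mathbf{x} \in U \cap \mathbb{R}_{>0}^m$, and the conclusion follows by picking $\delta \leq \min\{1,\, \epsilon k_j/(Mk_i)\}$. I do not expect any serious obstacle here: the proof reduces to a careful continuity/compactness estimate on a monomial ratio, the key point being the splitting of the exponent product along $I$ versus its complement. It is worth noting that the semi-locking hypothesis on $I$ plays no role in the argument — only the partial-order condition $\mathcal{R}_i \curlyeqprec_I \mathcal{R}_j$ and the definition of $L_I$ are used.
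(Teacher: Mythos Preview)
Your argument is correct. The paper does not supply its own proof of this lemma; it is quoted verbatim as Lemma~4 of \cite{A3} and used as a black box, so there is no in-paper proof to compare against. Your approach --- writing the ratio $R_i(\mathbf{x})/R_j(\mathbf{x})$ as a monomial, splitting the exponents along $I$ and its complement, bounding the $l\notin I$ factors by compactness/continuity near $K$, and driving the $l\in I$ factors to zero using $\alpha_{il}-\alpha_{jl}\geq 0$ with at least one strict inequality --- is exactly the natural route and is essentially how the result is proved in \cite{A3}. Your closing remark is also on point: the semi-locking hypothesis on $I$ is not used in the argument, only the ordering $\mathcal{R}_i\curlyeqprec_I\mathcal{R}_j$ and the structure of $L_I$.
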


The concept of \emph{dynamical non-emptiability} depends on two cones, the \emph{feasibility cone} and \emph{criticality cone}, which are defined as follows.

\begin{definition}
\label{feasibilitycone}
The \textbf{feasibility cone} is defined to be
\[\mathcal{F}_\epsilon(I) = \left\{ \mathbf{v} \in \mathbb{R}_{\geq 0}^r \; | \; v_i \leq \epsilon v_j, \; \forall \; \mathcal{R}_i,\mathcal{R}_j \in \mathcal{R} \mbox{ such that } \mathcal{R}_i \curlyeqprec_I \mathcal{R}_j \right\}\]
where $\epsilon > 0$.
\end{definition}

\begin{definition}
\label{criticalitycone}
The \textbf{criticality cone} is defined to be
\[\mathcal{C}(I) = \left\{ \mathbf{v} \in \mathbb{R}_{\geq 0}^r \; | \; [\Gamma \mathbf{v}]_k \leq 0, \; \forall \; k \in I \right\}.\]
\end{definition}

We can now define \emph{dynamical non-emptiability}, which is one of the major concepts introduced in \cite{A3}.

\begin{definition}
\label{dynamicallynonemptiable}
A critical semi-locking set is said to be \textbf{dynamically non-emptiable} if there exists an $\epsilon > 0$ such that
\[\mathcal{F}_\epsilon(I) \cap \mathcal{C}(I) = \left\{ \mathbf{0} \right\}.\]
\end{definition}

\subsection{Results of Angeli, De Leenheer, and Sontag}
\label{resultssection}

The following two persistence results are proved in \cite{A3}. They are the basis of the original work contained in Section \ref{originalresultssection}. (Two sets $I_1$ and $I_2$ are \emph{nested} if $I_1 \subset I_2$ or $I_2 \subset I_1$.)

\begin{theorem}[Theorem 2, \cite{A3}]
\label{sontag1}
Consider a chemical reaction network satisfying the following assumptions:
\begin{enumerate}
\item
the system is conservative;
\item
every semi-locking set contains the support of a semi-conservation vector.
\end{enumerate}
Then the system is persistent.
\end{theorem}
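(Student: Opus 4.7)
The plan is to reduce persistence to a contradiction obtained from the semi-conservation vectors guaranteed by assumption 2, combining the bounded-trajectory remark after Definition \ref{conservative} with Lemma \ref{lemma461}. First I would observe that since the system is conservative, the conservation vector $\mathbf{c} \in \mathbb{R}_{>0}^m$ gives $\frac{d}{dt}[\mathbf{c}^T \mathbf{x}(t)] = \mathbf{c}^T \Gamma R(\mathbf{x}(t)) = 0$, so $\mathbf{c}^T \mathbf{x}(t) \equiv \mathbf{c}^T \mathbf{x}_0$; hence trajectories from $\mathbf{x}_0 \in \mathbb{R}_{>0}^m$ are bounded and $\omega(\mathbf{x}_0)$ is nonempty and compact. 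Thus Definition \ref{persistence} applies, and by Lemma \ref{lemma461} any putative $\omega$-limit point on $\partial \mathbb{R}_{>0}^m$ must lie in some $L_I$ where $I$ is a semi-locking set.

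Next I would fix an arbitrary semi-locking set $I$ and, invoking assumption 2, produce a semi-conservation vector $\mathbf{c} \in \mathbb{R}_{\geq 0}^m$ whose support is contained in $I$; that is, $c_i \geq 0$ for all $i$, $c_i = 0$ for $i \notin I$, and $\mathbf{c}^T \Gamma = \mathbf{0}^T$ with $\mathbf{c} \neq \mathbf{0}$. The same computation as above yields a conserved quantity
\[
\mathbf{c}^T \mathbf{x}(t) = \mathbf{c}^T \mathbf{x}_0 \quad \text{for all } t \geq 0,
\]
and since $\mathbf{x}_0 \in \mathbb{R}_{>0}^m$ while $\mathbf{c}$ is nonzero and nonnegative, this constant is strictly positive.

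Now I would derive the contradiction. Suppose $\mathbf{y} \in \omega(\mathbf{x}_0) \cap L_I$, so $y_i = 0$ for $i \in I$ and $y_i > 0$ for $i \notin I$. Because $c_i = 0$ for $i \notin I$, we get
\[
\mathbf{c}^T \mathbf{y} = \sum_{i \in I} c_i \cdot 0 + \sum_{i \notin I} 0 \cdot y_i = 0.
\]
Choosing a sequence $t_n \to \infty$ with $\mathbf{x}(t_n) \to \mathbf{y}$, continuity of the linear functional $\mathbf{c}^T(\cdot)$ gives $\mathbf{c}^T \mathbf{x}_0 = \lim_n \mathbf{c}^T \mathbf{x}(t_n) = \mathbf{c}^T \mathbf{y} = 0$, contradicting $\mathbf{c}^T \mathbf{x}_0 > 0$. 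Hence $\omega(\mathbf{x}_0) \cap L_I = \emptyset$ for every semi-locking set $I$, and since Lemma \ref{lemma461} excludes $\omega$-limit points in any other $L_I$, we conclude $\omega(\mathbf{x}_0) \cap \partial \mathbb{R}_{>0}^m = \emptyset$, establishing persistence.

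There is no substantive obstacle in this argument; the whole proof is really just the observation that a semi-conservation vector supported inside $I$ furnishes a linear functional that is positive on $\mathbb{R}_{>0}^m$ but vanishes identically on $L_I$, which is precisely what rules out $\omega$-limits on $L_I$. The only mild subtlety is being careful that support-containment $\operatorname{supp}(\mathbf{c}) \subseteq I$ (rather than equality) is what makes $\mathbf{c}^T \mathbf{y}$ vanish on $L_I$, and that $\mathbf{c} \neq \mathbf{0}$ together with $\mathbf{x}_0 \in \mathbb{R}_{>0}^m$ is what keeps the conserved quantity strictly positive.
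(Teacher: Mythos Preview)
Your argument is correct. The conservation vector bounds trajectories, Lemma~\ref{lemma461} restricts $\omega$-limit points to faces $L_I$ with $I$ a semi-locking set, and the semi-conservation vector supported inside $I$ furnishes a linear invariant that is strictly positive on $\mathbb{R}_{>0}^m$ but vanishes on $L_I$, yielding the contradiction. The care you take with $\operatorname{supp}(\mathbf{c}) \subseteq I$ (rather than equality) is exactly right given how assumption~2 is phrased.

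As for comparison: the paper does not actually supply its own proof of Theorem~\ref{sontag1}; it is quoted from \cite{A3} as background (see the opening sentence of Section~\ref{resultssection}). Your argument is essentially the classical one that \cite{A3} gives. The only place this paper independently recovers Theorem~\ref{sontag1} is indirectly, and only for mass-action kinetics: assumption~2 says every semi-locking set is non-critical, Theorem~\ref{noncritical} then shows each such set is weakly dynamically non-emptiable (via Farkas' Lemma with $J=\emptyset$), and Theorem~\ref{sontag4} delivers persistence. That route is heavier machinery whose payoff is that it places the non-critical case inside the same framework as the critical, weakly-dynamically-non-emptiable case; your direct conservation-law argument is shorter, needs no Farkas, and is not restricted to mass-action, but it does not generalize to critical semi-locking sets.
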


\begin{theorem}[Theorem 4, \cite{A3}]
\label{sontag2}
Consider a conservative mass-action system satisfying the following assumptions:
\begin{enumerate}
\item
all of its critical semi-locking sets are dynamically non-emptiable;
\item
there are no nested distinct critical locking sets.
\end{enumerate}
Then the system is persistent.
\end{theorem}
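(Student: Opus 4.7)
The plan is to argue by contradiction. Suppose the system is not persistent, so there exists $\mathbf{x}_0 \in \mathbb{R}_{>0}^m$ with $\omega(\mathbf{x}_0) \cap \partial \mathbb{R}_{>0}^m \neq \emptyset$. Conservativity ensures $\mathbf{x}(t)$ is bounded and bounded away from the origin, so $\omega(\mathbf{x}_0)$ is a nonempty, compact, invariant set, and by Lemma \ref{lemma461} any boundary $\omega$-limit point lies in $L_I$ for some semi-locking set $I$. Among all such $I$, choose one of minimal cardinality and call it $I^*$. The goal is to produce a nonzero $\mathbf{v} \in \mathcal{F}_\epsilon(I^*) \cap \mathcal{C}(I^*)$ for some $\epsilon > 0$, contradicting dynamical non-emptiability of $I^*$.

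Before constructing $\mathbf{v}$, I would verify that $I^*$ falls into the class to which dynamical non-emptiability applies, namely that $I^*$ is a critical locking set. That $I^*$ is a locking set follows from the invariance of $\omega(\mathbf{x}_0)$ together with minimality: a reaction whose reactants all lie outside $I^*$ would fire at a rate bounded away from zero near an $\omega$-limit point in $L_{I^*}$, producing species in $I^*$ at a positive rate and contradicting either invariance or the minimality of $I^*$. Criticality is where the hypothesis of no nested distinct critical locking sets enters: if $I^*$ contained the support $J$ of a semi-conservation vector, one could use the conserved positive quantity $\mathbf{c}^T \mathbf{x}$ (with $\mathrm{supp}(\mathbf{c}) = J$) to extract a strictly smaller semi-locking set still meeting $\omega(\mathbf{x}_0)$, contradicting minimality of $I^*$; the nonnested hypothesis rules out the only obstruction to carrying out this reduction.

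To construct $\mathbf{v}$, fix a compact $K \subset L_{I^*}$ containing an $\omega$-limit point and apply Lemma \ref{lemma4} to obtain a neighbourhood $U$ of $K$ in $\mathbb{R}_{>0}^m$ on which $R_i(\mathbf{x}) \leq \epsilon R_j(\mathbf{x})$ whenever $\mathcal{R}_i \curlyeqprec_{I^*} \mathcal{R}_j$. Since the trajectory returns to $K$, there is a sequence of intervals $[t_n, t_n']$ on which $\mathbf{x}(t) \in U$, and one can arrange that $N_n := \bigl\|\int_{t_n}^{t_n'} R(\mathbf{x}(s))\, ds\bigr\|_1 \to \infty$ by combining consecutive visits. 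Set
\[ \mathbf{v}_n = \frac{1}{N_n}\int_{t_n}^{t_n'} R(\mathbf{x}(s))\, ds. \]
Each $\mathbf{v}_n$ lies in $\mathcal{F}_\epsilon(I^*)$ (by Lemma \ref{lemma4}) and has unit norm, so passing to a subsequence yields a limit $\mathbf{v}$ with $\|\mathbf{v}\|_1 = 1$ and $\mathbf{v} \in \mathcal{F}_\epsilon(I^*)$. For each $k \in I^*$, integrating $\dot x_k = [\Gamma R(\mathbf{x})]_k$ over $[t_n, t_n']$ gives $[\Gamma (N_n \mathbf{v}_n)]_k = x_k(t_n') - x_k(t_n)$, which is bounded uniformly in $n$ by boundedness of $\mathbf{x}(t)$; dividing by $N_n \to \infty$ forces $[\Gamma \mathbf{v}]_k \leq 0$, so $\mathbf{v} \in \mathcal{C}(I^*)$, the desired contradiction.

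The main obstacle I anticipate is the combined bookkeeping in the third step: selecting the intervals $[t_n, t_n']$ so the trajectory remains in $U$ throughout, while simultaneously guaranteeing $N_n \to \infty$ (so the averaging argument dominates the bounded boundary terms in $x_k(t_n') - x_k(t_n)$) and that the limit $\mathbf{v}$ remains nonzero. A secondary subtlety is the reduction to a critical locking set in the second step; the no-nested-deadlocks hypothesis must be invoked precisely enough to exclude the alternative that the minimal semi-locking set meeting $\omega(\mathbf{x}_0)$ fails to be critical or fails to be a deadlock, rather than merely being semi-locking.
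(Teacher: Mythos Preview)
The paper does not give its own proof of this theorem: Theorem~\ref{sontag2} is stated in Section~\ref{resultssection} as background material, cited from \cite{A3} without proof. So there is no ``paper's proof'' to compare against directly.

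That said, the paper does supersede this result. Theorem~\ref{sontag4}, together with Theorem~\ref{noncritical} and the remark that dynamical non-emptiability implies weak dynamical non-emptiability, yields a strictly stronger conclusion than Theorem~\ref{sontag2}: it drops the nested-locking-sets hypothesis entirely and weakens conservativity to boundedness. The method there is completely different from your sketch. Rather than time-averaging $R(\mathbf{x}(s))$ along the trajectory to manufacture a nonzero $\mathbf{v}\in\mathcal{F}_\epsilon(I)\cap\mathcal{C}(I)$, the paper encodes the cone conditions as $\tilde\Gamma\mathbf{v}\le\mathbf{0}$, applies Farkas' Lemma (Lemma~\ref{farkaslemma}) to obtain a strictly positive $\mathbf{c}$ with $\mathbf{c}^T\tilde\Gamma\ge\mathbf{0}^T$, and extracts from $\mathbf{c}$ a vector $\alpha$ satisfying the hypotheses of Theorem~\ref{bigtheorem}. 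This is a Lyapunov-type argument: it shows $\langle\alpha,\mathbf{f}(\mathbf{x})\rangle\le 0$ near $L_I$ and then invokes an abstract persistence criterion, rather than analysing the long-time behaviour of a specific trajectory.

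Your sketch is closer in spirit to the original argument in \cite{A3}, but several steps are loose as written. The claim that the minimal $I^*$ must be a \emph{locking} set does not follow from the reasoning you give: a reaction with all reactants outside $I^*$ need not produce any species in $I^*$ (indeed, since $I^*$ is semi-locking, it cannot), so the existence of such a reaction does not by itself contradict invariance or minimality. The reduction to criticality via the no-nested-deadlocks hypothesis is also only gestured at; the actual role of that hypothesis in \cite{A3} is more delicate than ``rules out the only obstruction.'' Finally, arranging $N_n\to\infty$ while keeping $\mathbf{x}(t)\in U$ on each $[t_n,t_n']$ is the genuine technical crux, and ``combining consecutive visits'' does not obviously work since the trajectory may leave $U$ between visits; one needs a careful argument about how the trajectory accumulates time near $L_{I^*}$.
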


\section{Original Results}
\label{originalresultssection}

In this section, we generalize the results contained in Section \ref{resultssection}. We start by presenting some necessary background material not presented in \cite{A3}.

The following result is a version of the well-known Farkas' Lemma and should be contrasted with Lemma 5 of \cite{A3} as it will be used in similar fashion. (This formulation of the result follows from the statement of Farkas' Lemma given in \cite{R}, taking $\mathbf{a}_0 \in \mathbb{R}_{\geq 0}^m \setminus \left\{ \mathbf{0} \right\}$.)

\begin{lemma}[Farkas' Lemma, \cite{F}]
\label{farkaslemma}
Consider $A \in \mathbb{R}^{m \times n}$. Then exactly one of the following two conditions is true:
\begin{enumerate}
\item
There exists $\mathbf{x} \in \mathbb{R}^n_{\geq 0}$, $\mathbf{x} \not\in \emph{ker}(A)$, such that $A \mathbf{x} \leq \mathbf{0}$.
\item
There exists $\mathbf{y} \in \mathbb{R}^m_{> 0}$ such that $A^T \mathbf{y} \geq \mathbf{0}$.
\end{enumerate}
\end{lemma}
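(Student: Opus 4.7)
My plan is to derive the lemma from a classical form of Farkas' Lemma via linear-programming duality, after first addressing mutual exclusivity by a short bilinear computation. For mutual exclusivity, suppose both $\mathbf{x}$ and $\mathbf{y}$ existed as described and evaluate $\mathbf{y}^T A \mathbf{x}$ in two ways. As $(A^T \mathbf{y})^T \mathbf{x}$ it is $\geq 0$, since both $A^T \mathbf{y}$ and $\mathbf{x}$ are coordinatewise nonnegative; as $\mathbf{y}^T (A \mathbf{x})$ it is $< 0$, since $\mathbf{y}$ is coordinatewise strictly positive while $A \mathbf{x} \leq \mathbf{0}$ has at least one strictly negative entry (because $\mathbf{x} \not\in \ker(A)$ forces $A \mathbf{x} \neq \mathbf{0}$). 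The contradiction $0 \leq \mathbf{y}^T A \mathbf{x} < 0$ shows that at most one of the two conditions can hold.

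To show at least one must hold, fix an auxiliary vector $\mathbf{a}_0 \in \mathbb{R}^m_{>0}$ and consider the primal linear program of maximizing $-\mathbf{a}_0^T A \mathbf{x}$ over $\mathbf{x} \geq \mathbf{0}$ with $A \mathbf{x} \leq \mathbf{0}$, whose standard LP dual is the problem of minimizing $0$ over $\mathbf{z} \geq \mathbf{0}$ with $A^T \mathbf{z} \geq -A^T \mathbf{a}_0$. The primal is always feasible at $\mathbf{x} = \mathbf{0}$, and because its feasible region is a cone and its objective linear, its optimal value is either $0$ or $+\infty$. If the optimum is $+\infty$, then some $\mathbf{x} \geq \mathbf{0}$ satisfies $A\mathbf{x} \leq \mathbf{0}$ and $\mathbf{a}_0^T A\mathbf{x} < 0$; strict positivity of $\mathbf{a}_0$ combined with $A\mathbf{x} \leq \mathbf{0}$ forces $A\mathbf{x} \neq \mathbf{0}$, so $\mathbf{x} \not\in \ker(A)$ and condition (1) holds.

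If instead the primal optimum is $0$, then strong LP duality (a standard consequence of the version of Farkas' Lemma in \cite{R} applied with the chosen $\mathbf{a}_0$) makes the dual feasible, producing some $\mathbf{z} \in \mathbb{R}^m_{\geq 0}$ with $A^T(\mathbf{z} + \mathbf{a}_0) \geq \mathbf{0}$. Setting $\mathbf{y} = \mathbf{z} + \mathbf{a}_0$ gives $\mathbf{y} \geq \mathbf{a}_0 > \mathbf{0}$ coordinatewise and $A^T \mathbf{y} \geq \mathbf{0}$, which is exactly condition (2). The main obstacle I anticipate is ensuring that the chosen primal/dual pair cleanly fits the scope of the cited Farkas' Lemma and justifying the $\{0, +\infty\}$ dichotomy on the primal optimum; the additive shift by $\mathbf{a}_0$ is the key device that converts the dual's nonnegativity $\mathbf{z} \geq \mathbf{0}$ into the strict positivity $\mathbf{y} > \mathbf{0}$ demanded by condition (2).
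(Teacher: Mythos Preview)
Your argument is correct. The paper does not actually prove this lemma; it simply cites Rockafellar and remarks that the stated form follows by ``taking $\mathbf{a}_0 \in \mathbb{R}_{\geq 0}^m \setminus \{\mathbf{0}\}$,'' and your proof is precisely a fleshed-out version of that hint: you introduce an auxiliary strictly positive $\mathbf{a}_0$, reduce to the classical Farkas/LP-duality alternative, and use the additive shift $\mathbf{y} = \mathbf{z} + \mathbf{a}_0$ to upgrade dual nonnegativity to the strict positivity required in condition~(2).
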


The following persistence result can be found in \cite{S-J}. We do not prove it here.

\begin{theorem}[Theorem 3.13, \cite{S-J}]
\label{bigtheorem}
Consider a general mass-action system with bounded solutions. Suppose that for every $L_I$ corresponding to a semi-locking set $I$ there exists an $\alpha$ satisfying
\begin{equation}
\label{condition1}
\alpha = \left\{ \begin{array}{l} \alpha_i < 0, \mbox{ for } i \in I \\ \alpha_i = 0, \mbox{ for } i \not\in I \end{array} \right.
\end{equation}
and the following property: for every compact subset $K$ of $L_I$, there exists a neighbourhood $U$ of $K$ in $\mathbb{R}_{\geq 0}^m$ such that
\begin{equation}
\label{condition2}
\langle \alpha, \dot{\mathbf{x}} \rangle \leq 0 \mbox{ for all } \mathbf{x} \in U.
\end{equation}
Then the system is persistent.
\end{theorem}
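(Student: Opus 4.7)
The plan is to argue by contradiction, using Lemma \ref{lemma461} to localize the bad behaviour on the boundary to a single face $L_I$ and then treating the linear functional $V(\mathbf{x}) = \langle \alpha, \mathbf{x} \rangle$ as a local Lyapunov function on a neighborhood of $\omega(\mathbf{x}_0) \cap L_I$. Suppose $\omega(\mathbf{x}_0) \cap \partial \mathbb{R}_{>0}^m \neq \emptyset$ for some $\mathbf{x}_0 \in \mathbb{R}_{>0}^m$. By Lemma \ref{lemma461} there exists a semi-locking set $I$ with $\omega(\mathbf{x}_0) \cap L_I \neq \emptyset$; since $\{1,\ldots,m\}$ has only finitely many subsets, I choose such an $I$ maximal by inclusion. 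Using $\overline{L_I} = \bigsqcup_{J \supseteq I} L_J$ together with Lemma \ref{lemma461} (to discard non-semi-locking $J$) and maximality (to discard semi-locking $J \supsetneq I$), I obtain $K := \omega(\mathbf{x}_0) \cap L_I = \omega(\mathbf{x}_0) \cap \overline{L_I}$. This $K$ is the intersection of the compact set $\omega(\mathbf{x}_0)$ with the closed set $\overline{L_I}$, hence compact; and $K \subset L_I$. The hypothesis therefore supplies an $\alpha$ (with $\alpha_i < 0$ for $i \in I$ and $\alpha_i = 0$ otherwise) and an open neighborhood $U$ of $K$ in $\mathbb{R}_{\geq 0}^m$ on which $\dot V := \langle \alpha, \dot{\mathbf{x}} \rangle \leq 0$.

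Two structural properties of $V(\mathbf{x}) = \langle \alpha, \mathbf{x} \rangle$ drive the contradiction: $V \leq 0$ on $\mathbb{R}_{\geq 0}^m$ with equality precisely on $\overline{L_I}$, and $V$ is non-increasing along any trajectory segment lying in $U$. Fix $\mathbf{y}^* \in K$ and times $t_m \to \infty$ with $\mathbf{x}(t_m) \to \mathbf{y}^*$; then $\mathbf{x}(t_m) \in U$ eventually and $V(\mathbf{x}(t_m)) \to V(\mathbf{y}^*) = 0$. For each large $m$ set $s_m := \sup\{s \leq t_m : \mathbf{x}(s) \notin U\}$, with $s_m = 0$ if this set is empty; then $\mathbf{x}(s_m) \in \partial U \cap \mathbb{R}_{>0}^m$ and monotonicity of $V$ on $(s_m, t_m]$ gives $V(\mathbf{x}(s_m)) \geq V(\mathbf{x}(t_m))$.

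Two cases on $\{s_m\}$ each produce a contradiction. If $\{s_m\}$ is bounded, pass to a subsequence $s_m \to s^* < \infty$; then every $t > s^*$ lies in some $(s_m, t_m]$, so $\mathbf{x}(t) \in U$ for all $t > s^*$, and $V(\mathbf{x}(t))$ is monotone non-increasing on $(s^*, \infty)$ with limit $V_\infty \leq V(\mathbf{x}(s^*+1)) < 0$ (since $\mathbf{x}(s^*+1) \in \mathbb{R}_{>0}^m$), contradicting $V(\mathbf{x}(t_m)) \to 0$. If $s_m \to \infty$, boundedness of the trajectory gives a subsequential limit $\mathbf{x}(s_m) \to \mathbf{z} \in \partial U$ with $\mathbf{z} \in \omega(\mathbf{x}_0)$; the sandwich $0 \geq V(\mathbf{x}(s_m)) \geq V(\mathbf{x}(t_m)) \to 0$ yields $V(\mathbf{z}) = 0$, hence $\mathbf{z} \in \overline{L_I}$, so $\mathbf{z} \in \omega(\mathbf{x}_0) \cap \overline{L_I} = K \subset U$, contradicting $\mathbf{z} \in \partial U$. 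I anticipate the subtlest step to be the maximal choice of $I$: it is exactly what guarantees $K$ is a closed (and hence compact) subset of $L_I$, and what closes the $s_m \to \infty$ case by preventing $\mathbf{z}$ from escaping onto a deeper face of $\partial \mathbb{R}_{>0}^m$ that still meets $\omega(\mathbf{x}_0)$.
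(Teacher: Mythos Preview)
The paper does not prove this theorem; it explicitly states ``We do not prove it here'' and cites \cite{S-J}. The only hint about the original argument appears in the proof of Corollary~\ref{sontag5}, which invokes ``the inductive hypothesis of Theorem~3.3 of \cite{S-J} from $|I|=m$ to $|I|=1$,'' indicating that the source proof proceeds by downward induction on the cardinality of the semi-locking set.

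Your argument is correct and amounts to a streamlined variant of that induction: rather than inducting on $|I|$, you select a maximal $I$ with $\omega(\mathbf{x}_0)\cap L_I\neq\emptyset$, which in one stroke guarantees that $\omega(\mathbf{x}_0)\cap\overline{L_I}=\omega(\mathbf{x}_0)\cap L_I$ is compact inside $L_I$. The Lyapunov-type analysis with $V(\mathbf{x})=\langle\alpha,\mathbf{x}\rangle$ and the last-exit times $s_m$ is sound; the key closure step---that the limit $\mathbf{z}$ of $\mathbf{x}(s_m)$ must land back in $K\subset U$---is exactly where maximality does the work that induction would otherwise supply. Two cosmetic points: your dichotomy ``$\{s_m\}$ bounded'' versus ``$s_m\to\infty$'' should be phrased as ``$\{s_m\}$ has a bounded subsequence'' versus ``$s_m\to\infty$ along a subsequence'' to be literally exhaustive, and the claim $\mathbf{x}(s_m)\in\partial U$ needs the side remark that $s_m>0$ (which holds once $s_m\to\infty$). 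Neither affects the validity of the argument.
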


\subsection{Weak Dynamical Non-Emptiability}

In this section, we extend the notion of dynamical non-emptiability by modifying the feasibility cone and introducing a kernel condition. We also reformulate the conditions required for inclusion in the feasibility and criticality cones as matrix conditions which will allow us to prove the main result of the paper (Theorem \ref{sontag4}).

Our notion of dynamical non-emptiability depends on the selection of a set $J \subseteq \mathcal{R}_I$ where
\begin{equation}
\label{J}
\mathcal{R}_I = \left\{ (i,j) \in \left\{ 1, \ldots, r \right\} \times \left\{ 1, \ldots, r \right\} \; | \; \mathcal{R}_i \curlyeqprec_I \mathcal{R}_j \right\}.
\end{equation}
The key modification here is that we do not necessarily need to consider \emph{all} pairs $(i,j)$ satisfying $\mathcal{R}_i \curlyeqprec_I \mathcal{R}_j$; often it is sufficient to consider a strict subset of these pairs of reactions. In Section \ref{examplessection}, we will see that this modification allow us to encompass more chemical kinetics systems than we would be able to otherwise.

We will need the following two concepts.

\begin{definition}
\label{modifiedfeasibilitycone}
We define the \textbf{feasibility cone relative to} $J$ to be
\[\mathcal{F}_\epsilon(J) = \left\{ \mathbf{v} \in \mathbb{R}_{\geq 0}^r \; | \; v_i \leq \epsilon v_j, \mbox{ for all } (i,j) \in J \right\}\]
where $\epsilon > 0$.
\end{definition}

\begin{definition}
\label{kernel}
We define the \textbf{kernel of} $I$ \textbf{and} $J$ to be
\[\begin{split} \emph{ker}(I,J,\epsilon) & = \left\{ \mathbf{v} \in \mathbb{R}_{\geq 0}^r \; | \; [\Gamma \mathbf{v}]_k = 0, \mbox{ for all } k \in I \right. \\ & \hspace{0.86in} \left. \mbox{ and } v_i = \epsilon v_j, \mbox{ for all } (i,j) \in J \right\} \end{split} \]
where $\epsilon > 0$.
\end{definition}

The following notion of dynamical non-emptiability is our own. It is more general than that contained in \cite{A3} in that it makes use of the freedom to select an appropriate $J \subseteq \mathcal{R}_I$ and broadens the inclusion principle to a kernel condition. (It is clear that the standard notion of dynamical non-emptiability is included as a special case of the following by taking $J = \mathcal{R}_I$ and recognizing that $\left\{ \mathbf{0} \right\} \subseteq$ ker$(I,J,\epsilon)$.)

\begin{definition}
\label{weaklydynamicallynonemptiable}
A critical semi-locking set $I$ is said to be \textbf{weakly dynamically non-emptiable} if there exists an $\epsilon > 0$ and a $J$ satisfying (\ref{J}) such that
\[\mathcal{C}(I) \cap \mathcal{F}_\epsilon(J) \subseteq \emph{ker}(I,J,\epsilon).\]
\end{definition}

In order to relate the above conditions to Farkas' Lemma (Theorem \ref{farkaslemma}) we restate them as matrix conditions. We let $n_I = |I|$ and $n_J = |J|$. We define $\Gamma_I \in \mathbb{R}^{n_I \times r}$ to be the matrix $\Gamma$ with the rows $\Gamma_{k \cdot}$, $k \not\in I$, removed. We define $\Gamma_J \in \mathbb{R}^{n_J \times r}$ to be the matrix where each row corresponds to a specific condition $\mathcal{R}_i \curlyeqprec_I \mathcal{R}_j$, $(i,j) \in J$, so that in that row there is a one in the $i^{th}$ column, a $-\epsilon$ in the $j^{th}$ column, and zeroes elsewhere. Lastly, we define $\tilde{\Gamma} \in \mathbb{R}^{(n_I+n_J) \times r}$ to be
\[\tilde{\Gamma} = \left[ \begin{array}{c} \Gamma_I \\ \Gamma_J \end{array} \right].\]

The following result can be trivially seen.



\begin{lemma}
\label{lemma12}
The condition $\mathcal{C}(I) \cap \mathcal{F}_\epsilon(J) \subseteq$ ker$(I,J,\epsilon)$ is satisfied if and only if
\[\tilde{\Gamma} \mathbf{v} \leq \mathbf{0} \mbox{ for } \mathbf{v} \in \mathbb{R}_{\geq 0}^{r} \; \; \; \; \; \Longrightarrow \; \; \; \; \; \mathbf{v} \in \emph{ker}(\tilde{\Gamma}).\] 
\end{lemma}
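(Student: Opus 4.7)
The plan is to simply unpack all three defined objects into explicit matrix inequalities and observe that the stated equivalence then reduces to a tautology. No deep argument should be needed; Farkas' Lemma is \emph{not} invoked here. The only mild subtlety is bookkeeping the nonnegativity constraint $\mathbf{v}\in\mathbb{R}_{\geq 0}^r$ which is built into all three sets $\mathcal{C}(I)$, $\mathcal{F}_\epsilon(J)$, and $\mathrm{ker}(I,J,\epsilon)$ but is carried implicitly on the right-hand side of the claimed equivalence.

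First I would rewrite $\mathcal{C}(I)$ as $\{\mathbf{v}\in\mathbb{R}_{\geq 0}^r \mid \Gamma_I\mathbf{v}\leq\mathbf{0}\}$, which is immediate from Definition \ref{criticalitycone} together with the construction of $\Gamma_I$ as the submatrix of $\Gamma$ obtained by retaining exactly the rows indexed by $I$. Next I would do the same for $\mathcal{F}_\epsilon(J)$: by construction each row of $\Gamma_J$ has a $1$ in column $i$, a $-\epsilon$ in column $j$, and zeroes elsewhere, so the $(i,j)$-th component of $\Gamma_J\mathbf{v}$ is exactly $v_i-\epsilon v_j$, and the defining inequality $v_i\leq \epsilon v_j$ becomes $[\Gamma_J\mathbf{v}]_{(i,j)}\leq 0$. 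Thus $\mathcal{F}_\epsilon(J)=\{\mathbf{v}\in\mathbb{R}_{\geq 0}^r \mid \Gamma_J\mathbf{v}\leq\mathbf{0}\}$. Stacking the two blocks gives
\[
\mathcal{C}(I)\cap\mathcal{F}_\epsilon(J)=\{\mathbf{v}\in\mathbb{R}_{\geq 0}^r \mid \tilde{\Gamma}\mathbf{v}\leq\mathbf{0}\}.
\]

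For the kernel, the same translation turns the two equality conditions defining $\mathrm{ker}(I,J,\epsilon)$ into $\Gamma_I\mathbf{v}=\mathbf{0}$ and $\Gamma_J\mathbf{v}=\mathbf{0}$, i.e.\ $\tilde{\Gamma}\mathbf{v}=\mathbf{0}$. Combined with $\mathbf{v}\in\mathbb{R}_{\geq 0}^r$ this yields $\mathrm{ker}(I,J,\epsilon)=\mathbb{R}_{\geq 0}^r\cap\mathrm{ker}(\tilde{\Gamma})$.

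Once these identifications are in place, the claimed equivalence is immediate in both directions. For the forward direction, if $\mathcal{C}(I)\cap\mathcal{F}_\epsilon(J)\subseteq\mathrm{ker}(I,J,\epsilon)$ and $\mathbf{v}\in\mathbb{R}_{\geq 0}^r$ satisfies $\tilde{\Gamma}\mathbf{v}\leq\mathbf{0}$, then $\mathbf{v}\in\mathcal{C}(I)\cap\mathcal{F}_\epsilon(J)$, hence $\mathbf{v}\in\mathrm{ker}(I,J,\epsilon)\subseteq\mathrm{ker}(\tilde{\Gamma})$. Conversely, any $\mathbf{v}\in\mathcal{C}(I)\cap\mathcal{F}_\epsilon(J)$ lies in $\mathbb{R}_{\geq 0}^r$ and satisfies $\tilde{\Gamma}\mathbf{v}\leq\mathbf{0}$, so the assumed implication gives $\tilde{\Gamma}\mathbf{v}=\mathbf{0}$, and together with $\mathbf{v}\geq\mathbf{0}$ this puts $\mathbf{v}$ in $\mathrm{ker}(I,J,\epsilon)$. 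The only place where a careless reader could slip is conflating $\mathrm{ker}(\tilde{\Gamma})$ (the usual linear-algebraic kernel, a subspace of $\mathbb{R}^r$) with $\mathrm{ker}(I,J,\epsilon)$ (its intersection with the nonnegative orthant), but since the hypothesis of the right-hand implication already constrains $\mathbf{v}$ to $\mathbb{R}_{\geq 0}^r$, this distinction causes no trouble.
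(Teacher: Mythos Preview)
Your proposal is correct and matches the paper's approach exactly: the paper states only that ``the following result can be trivially seen'' and gives no proof, and your argument is precisely the definition-unpacking that justifies this remark. Your careful note distinguishing $\ker(\tilde{\Gamma})$ from $\ker(I,J,\epsilon)=\mathbb{R}_{\geq 0}^r\cap\ker(\tilde{\Gamma})$ is the only point that might require a moment's thought, and you handle it correctly.
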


\subsection{Facets and Non-Critical Semi-Locking Sets}
\label{facetssection}

In general, determining whether a semi-locking set $I$ is weakly dynamically non-emptiable can be tedious. To this end, in this section we show that there are classes of semi-locking sets which are necessarily weakly dynamically nonemptiable: namely, semi-locking sets corresponding to facets (i.e. sets $F_I$ of dimension $s-1$) of a weakly reversible mechanism, and semi-locking sets which are non-critical.

Facets are the central topic of consideration in \cite{A-S}, where the authors prove the following result.

\begin{theorem}[Theorem 3.4, \cite{A-S}]
\label{andersonshiu}
Consider a weakly reversible mass-action system with bounded trajectories. Suppose that every semi-locking set $I$ is such that $F_I$ is a facet or empty. Then the system is persistent. 
\end{theorem}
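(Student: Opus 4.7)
The plan is to deduce Theorem \ref{andersonshiu} from the weak dynamical non-emptiability framework of this paper by proving the key intermediate claim previewed in the introduction: every semi-locking set $I$ for which $F_I$ is a facet of $\mathsf{C}_{\mathbf{x}_0}$ is weakly dynamically non-emptiable in the sense of Definition \ref{weaklydynamicallynonemptiable}. Once this is in hand, the theorem falls out by applying the paper's main persistence result (the generalization of Theorem \ref{sontag2} to weakly dynamically non-emptiable semi-locking sets).

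First I would dispose of the empty case. Since trajectories starting in $\mathbb{R}_{>0}^m$ are bounded and remain in $\mathsf{C}_{\mathbf{x}_0}$ by Proposition \ref{proposition2}, we have $\omega(\mathbf{x}_0) \subseteq \overline{\mathsf{C}}_{\mathbf{x}_0}$, so $\omega(\mathbf{x}_0) \cap L_I \subseteq \overline{\mathsf{C}}_{\mathbf{x}_0} \cap L_I = F_I = \emptyset$ whenever $F_I$ is empty. Combined with Lemma \ref{lemma461}, the only semi-locking sets whose $L_I$ could support $\omega$-limit points are those with $F_I$ a facet.

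Second I would exploit the geometry of a facet. The condition $\dim(F_I) = s-1$ forces the coordinate projection $\pi_I$ restricted to $S$ to have rank exactly one, so its orthogonal complement in $\mathbb{R}^{|I|}$ has dimension $|I|-1$; lifting these vectors back to $\mathbb{R}^m$ with zero entries outside $I$ produces $|I|-1$ linearly independent vectors supported on $I$ and lying in $S^\perp$, i.e.\ satisfying $\mathbf{c}^T\Gamma=\mathbf{0}^T$. Weak reversibility enters here: it guarantees strictly positive vectors in $\ker(\Gamma)$ associated with each connected component of the reaction graph, which provide the positivity needed to upgrade the sign-indefinite facet vectors to a strictly positive semi-conservation-like vector on $I$. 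Translating Definition \ref{weaklydynamicallynonemptiable} through Lemma \ref{lemma12}, the weak dynamical non-emptiability of $I$ is equivalent to showing $\tilde{\Gamma}\mathbf{v}\leq \mathbf{0},\ \mathbf{v}\geq\mathbf{0}\Rightarrow \mathbf{v}\in\ker(\tilde{\Gamma})$; by Farkas' Lemma (Lemma \ref{farkaslemma}) this in turn amounts to producing a strictly positive dual certificate $\mathbf{y}=(\mathbf{y}_I,\mathbf{y}_J)>\mathbf{0}$ with $\Gamma_I^T\mathbf{y}_I+\Gamma_J^T\mathbf{y}_J\geq \mathbf{0}$, which the vectors just constructed, together with a careful choice of $J\subseteq\mathcal{R}_I$ and $\epsilon>0$, supply.

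The main obstacle, and the reason for strengthening dynamical non-emptiability to its weak form, will be producing a strictly positive $\mathbf{y}_I$ from the $|I|-1$ sign-indeterminate conservation vectors that the facet condition yields: these vectors are not a priori nonnegative, and forcing a positive combination is where weak reversibility must be invoked via the cycle structure of the reaction graph. Tuning $J$ and $\epsilon$ so that the resulting $\Gamma_J^T\mathbf{y}_J$ contribution simultaneously compensates any residual negativity in $\Gamma_I^T\mathbf{y}_I$ and respects the partial-ordering pairs in $\mathcal{R}_I$ is the most delicate step, and it is precisely the freedom to take $J\subsetneq\mathcal{R}_I$, granted by Definition \ref{weaklydynamicallynonemptiable}, that makes this flexibility available.
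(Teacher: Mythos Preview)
First, a framing point: the paper does not itself prove Theorem~\ref{andersonshiu}; it is quoted from \cite{A-S}. What the paper \emph{does} prove is Theorem~\ref{facettheorem} (facet $\Rightarrow$ weakly dynamically non-emptiable), and together with Theorem~\ref{sontag4} this yields Theorem~\ref{andersonshiu} as a corollary. Your overall plan---reduce to ``facet $\Rightarrow$ WDNE'' and then invoke the main persistence theorem---is therefore exactly the route implicit in the paper.

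Where your proposal departs, and where it has a genuine gap, is in the argument for ``facet $\Rightarrow$ WDNE.'' The paper's proof of Theorem~\ref{facettheorem} is \emph{primal}: it imports from \cite{A-S} the structural fact that the columns of $\Gamma_I$ are all scalar multiples $\gamma_i\mathbf{z}$ of a single \emph{strictly positive} vector $\mathbf{z}\in\mathbb{R}_{>0}^{n_I}$ (equation~(\ref{29291})), and then uses weak reversibility in a very specific combinatorial way: within each linkage class $\mathcal{L}_k$ that moves species in $I$, the strong connectivity guarantees a reaction $\mathcal{R}_{i_k}\in R_+^{(k)}$ with $\mathcal{R}_j\curlyeqprec_I\mathcal{R}_{i_k}$ for every $j\in R_-^{(k)}$. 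The set $J$ is then taken to be exactly these pairs $(j,i_k)$, and the inclusion $\mathcal{C}(I)\cap\mathcal{F}_\epsilon(J)\subseteq\ker(I,J,\epsilon)$ is verified directly from the decomposition~(\ref{8389}).

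Your sketch instead tries to build a Farkas dual certificate. You correctly observe that $\pi_I(S)$ is one-dimensional and hence that there are $|I|-1$ left-null vectors of $\Gamma$ supported on $I$. But the step ``weak reversibility guarantees strictly positive vectors in $\ker(\Gamma)$ \dots\ which provide the positivity needed to upgrade the sign-indefinite facet vectors'' is a non sequitur: elements of $\ker(\Gamma)$ live in $\mathbb{R}^r$ (reaction space), whereas the vectors you are trying to make positive live in $\mathbb{R}^{n_I}$ (species space, restricted to $I$), and there is no mechanism by which the former upgrade the latter. The positivity you actually need is the positivity of the spanning vector $\mathbf{z}$ of $\pi_I(S)$, which is a consequence of the facet geometry (as proved in \cite{A-S}), not of weak reversibility. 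Weak reversibility enters only in the construction of $J$---to locate, within each linkage class, a ``dominant'' gain reaction against which every loss reaction can be compared via $\curlyeqprec_I$---and your proposal never constructs $J$ at all.

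A smaller issue: Theorem~\ref{sontag4} requires \emph{every} semi-locking set to be weakly dynamically non-emptiable, so handling the empty-$F_I$ case by the separate $\omega$-limit argument does not let you invoke Theorem~\ref{sontag4} as a black box; you would have to drop back to Theorem~\ref{bigtheorem} (or to the inductive argument used in Corollary~\ref{sontag5}) and merge the two cases there.
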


The authors also connect the notion of a facet with the traditional notion of dynamical non-emptiability (Corollary 3.5, \cite{A-S}). Their result, however, overstates the implications of $F_I$ being a facet. It can be shown that semi-locking sets $I$ corresponding to facets $F_I$ may fail to be dynamically non-emptiable if there is a reaction $\mathcal{R}_i$ such that: (1) $\mathcal{R}_i \not\curlyeqprec_I \mathcal{R}_j$ for all $\mathcal{R}_j$ in the same linkage class (a connected portion of the reaction graph); and (2) $\mathcal{R}_i$ produces no stoichiometric change in the species in $I$. (See Example 1 of Section \ref{examplessection}.)

We now generalize this result by showing that there is no such exemption for weak dynamical non-emptiability.

\begin{theorem}
\label{facettheorem}
Consider a weakly reversible mass-action system with a semi-locking set $I \subseteq \left\{ 1, \ldots, m \right\}$. If $F_I$ is a facet then $I$ is weakly dynamically non-emptiable.
\end{theorem}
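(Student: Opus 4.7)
The plan is to convert the statement into a Farkas-type feasibility question and then build the certificate using the rank-one structure of $\Gamma_I$ forced by the facet hypothesis together with the strongly connected linkage classes supplied by weak reversibility. By Lemma \ref{lemma12} combined with Farkas' Lemma (Lemma \ref{farkaslemma}) applied with $A = \tilde{\Gamma}$, weak dynamical non-emptiability of $I$ is equivalent to the existence of some $J \subseteq \mathcal{R}_I$, some $\epsilon > 0$, and some $\mathbf{y} \in \mathbb{R}^{n_I + n_J}_{>0}$ satisfying $\tilde{\Gamma}^T \mathbf{y} \geq \mathbf{0}$, so the task is to construct such $\mathbf{y} = (\mathbf{y}_I, \mathbf{y}_J)$ and $J$ explicitly.

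The facet assumption pins down $\Gamma_I$. Since $F_I$ has dimension $s-1$, the rows of $\Gamma_I$ must span a one-dimensional subspace of $\mathbb{R}^r$, so I can write $\Gamma_{i \cdot} = \mu_i \mathbf{w}^T$ for $i \in I$, some fixed $\mathbf{w} \in \mathbb{R}^r \setminus \{\mathbf{0}\}$, and scalars $\mu_i$. In the critical case (the only one relevant for weak dynamical non-emptiability), a preliminary step shows that every $\mu_i$ has the same nonzero sign: if some $\mu_i = 0$ then $e_i$ itself is a semi-conservation vector supported in $I$, and if $\mu_i > 0 > \mu_{i'}$ for some $i, i' \in I$ then the non-negative combination $\mu_i^{-1} e_i - \mu_{i'}^{-1} e_{i'}$ is such a vector, in both cases contradicting criticality. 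Replacing $\mathbf{w}$ by $-\mathbf{w}$ if necessary, I may then take $\mu_i > 0$ for every $i \in I$, so that setting $\mathbf{y}_I = \mathbf{1}$ yields $\Gamma_I^T \mathbf{y}_I = \lambda \mathbf{w}$ with $\lambda = \sum_{i \in I} \mu_i > 0$.

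The main obstacle is handling reactions $k$ with $w_k < 0$, because for $w_k \geq 0$ the $k$-th component of $\Gamma_I^T \mathbf{y}_I$ is already non-negative (up to a small-$\epsilon$ perturbation coming from $J$). For such reactions I will combine the rank-one identity $\beta_{k \cdot}|_I - \alpha_{k \cdot}|_I = \mu w_k$ with weak reversibility: within a linkage class the $I$-parts of complexes lie on a common affine line and can be parameterized by a scalar $t_y$ with $y|_I = y_0|_I + t_y \mu$ and $w_k = t_{y_k'} - t_{y_k}$. Strong connectivity of the linkage class then forces the existence of a reaction $b_k$ in the same class with $w_{b_k} > 0$ whose reactant has $t$-value strictly less than $t_{y_k}$; one natural choice is the first $t$-increasing reaction encountered along a path in the linkage class from the product of $k$ back to its reactant. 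Positivity of $\mu$ upgrades this $t$-inequality to the componentwise strict inequality $\alpha_k|_I > \alpha_{b_k}|_I$, so $\mathcal{R}_k \curlyeqprec_I \mathcal{R}_{b_k}$ and $(k,b_k) \in \mathcal{R}_I$.

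With the dominators in hand, I will set $J = \{(k, b_k) : w_k < 0\}$, assign each $(y_J)_{(k, b_k)}$ a value slightly exceeding $\lambda |w_k|$, and take $\epsilon > 0$ small enough that the negative contributions $-\epsilon (y_J)_{(k, b_k)}$ appearing in the Farkas entry of each dominator $b_k$ are outweighed by $\lambda w_{b_k} > 0$. A reaction-by-reaction check that the three cases $w_k < 0$, $w_k = 0$ (never a dominator under this choice of $J$), and $w_k > 0$ each satisfy $[\tilde{\Gamma}^T \mathbf{y}]_k \geq 0$ then closes the argument.
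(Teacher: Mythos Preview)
Your proof is correct and shares the same skeleton as the paper's: both exploit the rank-one structure of $\Gamma_I$ forced by the facet hypothesis, and both use strong connectivity of the linkage classes to pair each reaction that decreases the $I$-species with a dominating reaction that increases them, thereby building a suitable $J$. The execution differs in three noteworthy ways. First, the paper imports the positivity of the one-dimensional direction (their $z_j>0$) directly from Anderson--Shiu's Theorem~3.2, whereas you deduce $\mu_i>0$ from criticality of $I$ by exhibiting semi-conservation vectors in the contrary cases; your route is more self-contained but only handles critical $I$ (non-critical $I$ being dispatched by Theorem~\ref{noncritical}). Second, the paper chooses a single dominator $\mathcal{R}_{i_k}$ per linkage class, while your $b_k$ is allowed to vary with $k$; either choice works. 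Third, and most substantively, the paper verifies the \emph{primal} condition of Lemma~\ref{lemma12} by direct inequality manipulation on $\Gamma_I\mathbf{v}$, whereas you pass through Farkas' Lemma and build the \emph{dual} certificate $\mathbf{y}=(\mathbf{y}_I,\mathbf{y}_J)$ explicitly. Your dual construction is arguably more in the spirit of the paper's overall methodology (which advertises Farkas' Lemma as the central tool), and it makes the choice of $\epsilon$ and the role of each $(y_J)_{(k,b_k)}$ quite transparent; the paper's primal argument has the modest advantage of not needing to re-invoke Lemma~\ref{farkaslemma} at this stage.
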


\begin{proof}
We will follow closely the proofs of Theorem 3.2 and Corollary 3.5 contained in \cite{A-S}.

In their proof for Theorem 3.2, the authors show that there exist $z_j>0$, $j \in I$, and $\gamma_i \in \mathbb{R}$, $i \in \left\{ 1, \ldots, r \right\}$, such that
\begin{equation}
\label{29291}
\gamma_i z_j = \beta_{ij}-\alpha_{ij}
\end{equation}
for all $j \in I$ and $i \in \left\{ 1, \ldots, r \right\}$. In other words, relative to the support of $I$, every reaction vector (the columns of $\Gamma$) lies within the span of a single vector which is strictly positive on the support of $I$. We will let $\mathbf{z} \in \mathbb{R}_{>0}^{n_I}$ denote the vector with the elements $z_j$, $j \in I$, indexed in order.

It follows immediately from (\ref{29291}) that every reaction in the system contributes either: (1) a net gain to all species in $I$, (2) a net loss to all species in $I$, or (3) no stoichiometric change to species in $I$. We can also divide the reactions according to the linkage classes $\mathcal{L}_k$, $k = 1, \ldots, \ell$. By weak reversibility each linkage class is strongly connected. We will let $R^{(k)}$ denote the reactions in the $k^{th}$ linkage class, and $R_+^{(k)}$, $R_-^{(k)},$ and $R_0^{(k)}$ denote respectively the reactions in the $k^{th}$ linkage class which contribute a net gain, a net loss, or no change to all species in $I$. We notice that $\gamma_i > 0$ for $i \in R_+^{(k)}$, $\gamma_i < 0$ for $i \in R_-^{(k)}$, and $\gamma_i = 0$ for $i \in R_0^{(k)}$. Combined with (\ref{29291}), this division gives
\begin{equation}
\label{8389}
\Gamma_I \mathbf{v} = \mathbf{z} \left[ \sum_{k=1}^\ell \left( \sum_{i \in R_+^{(k)}} \gamma_i v_i - \sum_{j \in R_-^{(k)}} |\gamma_j| v_j \right) \right].
\end{equation}

We now proceed to construct our set $J \subseteq \mathcal{R}_I$ according to (\ref{J}). Since the system is weakly reversible, it follows that the product complex of every reaction $\mathcal{R}_i$ is itself a reactant complex for some other reaction (which we will denote $\mathcal{R}_{i'}$) in the $k^{th}$ linkage class. Since reactions may only produce simultaneous gain or loss to the species in $I$, it follows that: (1) $\mathcal{R}_{i'} \curlyeqprec_I \mathcal{R}_i$ if $i \in R_+^{(k)}$, (2) $\mathcal{R}_i \curlyeqprec_I \mathcal{R}_{i'}$ if $i \in R_-^{(k)}$, and (3) $\mathcal{R}_{i'} \not\curlyeqprec_I \mathcal{R}_i$ and $\mathcal{R}_i \not\curlyeqprec_I \mathcal{R}_{i'}$ if $i \in R_0^{(k)}$. (Notice here that $\mathcal{R}_{i'} \not\curlyeqprec_I \mathcal{R}_i$ and $\mathcal{R}_{i} \not\curlyeqprec_I \mathcal{R}_{i'}$ for $i \in R^{(k)}$ implies $\alpha_{ij} = \alpha_{{i'}j}$ for all $j \in I$, although this does not hold for general systems.)

Since the ordering relationship is transitive and can be extended throughout each linkage class by weak reversibility, it follows that the set of reactions corresponding to each linkage class $\mathcal{L}_k$ either: (1) contributes no stoichiometric change to the system (i.e. $i \in R_0^{(k)}$ for all reactions $\mathcal{R}_i$ corresponding to reactions in $\mathcal{L}_k$), or (2) contains a reaction $\mathcal{R}_{i_k}$, $i_k \in R_+^{(k)}$, such that $\mathcal{R}_j \curlyeqprec_I \mathcal{R}_{i_k}$ for all $j \in R_-^{(k)}$. We will ignore linkage classes included in case (1) since they do not affect (\ref{8389}).

We define the set
\[J = \mathop{\bigcup_{k = 1}^\ell}_{j \in R_-^{(k)}} (j,i_k).\]
Now assume that $\Gamma_J \mathbf{v} \leq 0$. This implies that we have
\begin{eqnarray*}
\Gamma_I \mathbf{v} & = & \mathbf{z} \left[ \sum_{k=1}^\ell \left( \sum_{i \in R_+^{(k)} \setminus i_k} \gamma_i v_i + \gamma_{i_k} v_{i_k} - \sum_{j \in R_-^{(k)}} |\gamma_j| v_j \right) \right] \\ & \geq & \mathbf{z} \left[ \sum_{k=1}^\ell \left( \sum_{i \in R_+^{(k)} \setminus i_k} \gamma_i v_i + \left( \gamma_{i_k} - \epsilon \sum_{j \in R_-^{(k)}} | \gamma_j | \right) v_{i_k} \right) \right].
\end{eqnarray*}
Regardless of the values of $\gamma_j$, $j \in R_-^{(k)}$, and $\gamma_{i_k}>0$ for $k=1, \ldots, \ell$, we can pick an $\epsilon > 0$ sufficient small so that $\Gamma_I \mathbf{v}\geq \mathbf{0}$ for every $\mathbf{v} \in \mathbb{R}_{\geq 0}^r$. In order to satisfy $\Gamma_I \mathbf{v} \leq \mathbf{0}$ for $\mathbf{v} \in \mathbb{R}_{\geq 0}^r$, therefore, we require $v_i = 0$ for all $i \in R_+^{(k)}$ and $i \in R_-^{(k)},$ $k = 1, \ldots, \ell$. We notice that $v_i > 0$ is permitted for $i \in R_0^{(k)}$; however, neither $\Gamma_I$ nor $\Gamma_J$ contain nonzero entries in their columns corresponding to the elements in $i \in R_0^{(k)}$. It follows that $\tilde{\Gamma} \mathbf{v} \leq 0$ for $\mathbf{v} \in \mathbb{R}_{\geq 0}$ entails $\mathbf{v} \in$ ker$(\tilde{\Gamma})$. Since this is a sufficient condition for the weak dynamical non-emptiability of $I$ by Lemma \ref{lemma12}, we are done.
\end{proof}

In \cite{A3}, the authors divide semi-locking sets according to whether they are critical or not. They first handle the case of non-critical semi-locking sets which culminates in Theorem \ref{sontag1}. It is only in the discussion of critical semi-locking sets that they introduce the further condition of dynamical nonemptiability. Here we simplify this discussion by showing that every non-critical semi-locking set is weakly dynamically non-emptiable and therefore falls within the scope of the discussion in Section \ref{criticalsemilockingsetssection}.

\begin{theorem}
\label{noncritical}
Consider a semi-locking set $I \subseteq \left\{ 1, \ldots, m \right\}$. If $I$ is non-critical then it is weakly dynamically non-emptiable.
\end{theorem}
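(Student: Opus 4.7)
The plan is to reformulate weak dynamical non-emptiability as a feasibility question through Lemma \ref{lemma12}, and then discharge it by a direct application of Farkas' Lemma. First I would take the simplest choice of auxiliary set, $J = \emptyset$, so that $\tilde{\Gamma} = \Gamma_I$. Under this choice Lemma \ref{lemma12} reduces the desired containment $\mathcal{C}(I) \cap \mathcal{F}_\epsilon(\emptyset) \subseteq \mbox{ker}(I, \emptyset, \epsilon)$ to the matrix implication that $\Gamma_I \mathbf{v} \leq \mathbf{0}$ with $\mathbf{v} \in \mathbb{R}_{\geq 0}^r$ forces $\mathbf{v} \in \mbox{ker}(\Gamma_I)$. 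Applying Lemma \ref{farkaslemma} with $A = \Gamma_I$, this implication is in turn equivalent to exhibiting a vector $\mathbf{y}_I \in \mathbb{R}_{>0}^{n_I}$ with $\Gamma_I^T \mathbf{y}_I \geq \mathbf{0}$.

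The candidate for $\mathbf{y}_I$ comes directly from non-criticality. By Definition \ref{critical} there is a semi-conservation vector $\mathbf{c} \in \mathbb{R}_{\geq 0}^m$ whose support is contained in $I$ and which satisfies $\mathbf{c}^T \Gamma = \mathbf{0}^T$. Setting $\mathbf{y}_I$ equal to the restriction of $\mathbf{c}$ to the entries indexed by $I$, and using that $\mathbf{c}$ vanishes outside $I$, the identity $\mathbf{c}^T \Gamma = \mathbf{0}^T$ reads exactly as $\Gamma_I^T \mathbf{y}_I = \mathbf{0} \geq \mathbf{0}$. In the clean case where the support of $\mathbf{c}$ is all of $I$, this $\mathbf{y}_I$ is automatically strictly positive, Farkas' dual alternative is witnessed, and Lemma \ref{lemma12} closes the argument for any $\epsilon > 0$.

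The hard part will be the residual case in which the support of $\mathbf{c}$ is a proper subset of $I$: here $\mathbf{y}_I = \mathbf{c}|_I$ is only nonnegative and vanishes on the coordinates of $I$ outside the support of $\mathbf{c}$, so the strict positivity required by Farkas is not available from $\mathbf{c}$ alone. My proposed remedy is to perturb $\mathbf{y}_I$ by a small positive vector supported on the deficient coordinates, restoring strict positivity at the cost of possibly introducing negative entries into $\Gamma_I^T \mathbf{y}_I$, and then to absorb those entries by enlarging $J$ with appropriately chosen ordering pairs $(i,j) \in \mathcal{R}_I$, whose associated rows $\mathbf{e}_i^T - \epsilon \mathbf{e}_j^T$ of $\Gamma_J$ contribute the needed compensating terms to $\tilde{\Gamma}^T$. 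Selecting $\epsilon > 0$ small enough, the combined vector $\mathbf{y} = (\mathbf{y}_I,\mathbf{y}_J)$ should then witness the Farkas alternative for the enlarged $\tilde{\Gamma}$, yielding the general conclusion via Lemma \ref{lemma12}.
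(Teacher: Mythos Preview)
Your clean-case argument with $J=\emptyset$ is exactly the paper's proof: take the semi-conservation vector $\mathbf{c}$, restrict to the rows indexed by $I$ to obtain a strictly positive $\mathbf{y}$ with $\mathbf{y}^T\Gamma_I=\mathbf{0}^T$, invoke the second alternative of Lemma~\ref{farkaslemma}, and conclude via Lemma~\ref{lemma12}. The paper stops there; it reads ``non-critical'' as saying that $I$ itself is the support of some semi-conservation vector, and so never confronts the proper-subset situation you flag as the ``hard part.''

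Your proposed remedy for that residual case, however, is not clearly viable. Enlarging $J$ by a pair $(i,j)\in\mathcal{R}_I$ lets you add a positive amount to the $i$-th coordinate of $\tilde{\Gamma}^T\mathbf{y}$ (at the price of subtracting $\epsilon$ times that amount from the $j$-th coordinate), but this device is available only when $\mathcal{R}_i\curlyeqprec_I\mathcal{R}_j$ holds for some $j$. Nothing in the hypotheses guarantees that every reaction index $i$ at which the perturbed $\Gamma_I^T\mathbf{y}_I'$ turns negative admits such a dominating partner; a reaction whose reactant stoichiometry on the coordinates in $I$ is already minimal has no pair $(i,j)\in\mathcal{R}_I$ whatsoever. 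So the perturbation-plus-compensation scheme, as sketched, does not close the argument, and you would need an additional structural source of ordering pairs---derived from the semi-locking property or otherwise---to make it work.
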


\begin{proof}
Suppose a semi-locking set $I \subseteq \left\{ 1, \ldots, m \right\}$ is non-critical. This means that $I$ corresponds to the support of a semi-conservation vector so that there exists a $\mathbf{c} \in \mathbb{R}_{\geq 0}^m$ satisfying
\[\mathbf{c} = \left\{ \begin{array}{ll} c_i > 0, \hspace{0.3in} & i \in I \\ c_i = 0, & i \not\in I\end{array} \right.\]
such that
\begin{equation}
\label{938}
\mathbf{c}^T \Gamma = \mathbf{0}^T.
\end{equation}
It follows from (\ref{938}) that there exists a $\mathbf{y} \in \mathbb{R}_{>0}^{n_I}$ such that
\[\mathbf{y}^T \Gamma_I = \mathbf{0}^T.\]

Since this implies condition \emph{2.} of Farkas' Lemma is satisfied, it follows that Condition \emph{1.} must necessarily be violated. It follows that any $\mathbf{v} \in \mathbb{R}_{\geq 0}^r$ satisfying $\Gamma_I \mathbf{v} \leq 0$ must be such that $\mathbf{v} \in$ ker$(\Gamma_I)$. By Lemma \ref{lemma12}, however, this is the condition for weak dynamical nonemptiability taking $J = \emptyset$ which is sufficient to prove the result.
\end{proof}

\subsection{Main Persistence Result}
\label{criticalsemilockingsetssection}

We are now prepared to present the main result of this paper. The following result is a generalization of Theorem \ref{sontag2} and includes Theorem \ref{sontag1} for mass-action kinetics by Theorem \ref{noncritical}.

\begin{theorem}
\label{sontag4}
Consider a mass-action system with bounded solutions. Suppose that every semi-locking set is weakly dynamically non-emptiable. Then the system is persistent.
\end{theorem}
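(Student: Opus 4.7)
The plan is to invoke Theorem \ref{bigtheorem} and reduce everything to constructing, for each semi-locking set $I$, a vector $\alpha \in \mathbb{R}^m$ supported on $I$ (strictly negative on $I$, zero off $I$) together with a neighborhood of each compact $K \subseteq L_I$ on which $\langle \alpha, \dot{\mathbf{x}} \rangle \leq 0$. The key tool is Farkas' Lemma (Lemma \ref{farkaslemma}) applied to $\tilde{\Gamma}$; the reformulation in Lemma \ref{lemma12} is precisely tailored for this.

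First I would translate weak dynamical non-emptiability into a Farkas alternative. By Lemma \ref{lemma12}, $I$ being weakly dynamically non-emptiable means that $\tilde{\Gamma} \mathbf{v} \leq \mathbf{0}$, $\mathbf{v} \in \mathbb{R}_{\geq 0}^r$, forces $\mathbf{v} \in \ker(\tilde{\Gamma})$. That is exactly the failure of condition 1 of Farkas' Lemma for $A = \tilde{\Gamma}$, so condition 2 supplies a vector $\mathbf{y} \in \mathbb{R}_{>0}^{n_I+n_J}$ with $\tilde{\Gamma}^T \mathbf{y} \geq \mathbf{0}$. Partitioning as $\mathbf{y} = (\mathbf{y}_I,\mathbf{y}_J)$ gives $\Gamma_I^T \mathbf{y}_I + \Gamma_J^T \mathbf{y}_J \geq \mathbf{0}$ with both blocks strictly positive.

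Next, define $\alpha \in \mathbb{R}^m$ by $\alpha_i = -(\mathbf{y}_I)_i < 0$ for $i \in I$ and $\alpha_i = 0$ otherwise, so that $\alpha^T \Gamma = -\mathbf{y}_I^T \Gamma_I$. Using $R(\mathbf{x}) \geq \mathbf{0}$ and the Farkas inequality, one obtains
\[
\langle \alpha, \dot{\mathbf{x}} \rangle = -\mathbf{y}_I^T \Gamma_I R(\mathbf{x}) \;\leq\; \mathbf{y}_J^T \Gamma_J R(\mathbf{x}) \;=\; \sum_{(i,j) \in J} (\mathbf{y}_J)_{(i,j)} \bigl( R_i(\mathbf{x}) - \epsilon R_j(\mathbf{x}) \bigr),
\]
where $\epsilon$ is the constant witnessing the weak dynamical non-emptiability of $I$. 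Since $J \subseteq \mathcal{R}_I$, Lemma \ref{lemma4} applied with this same $\epsilon$ produces, for each $(i,j) \in J$ and each compact $K \subseteq L_I$, a neighborhood of $K$ on which $R_i(\mathbf{x}) \leq \epsilon R_j(\mathbf{x})$; intersecting the finitely many such neighborhoods yields a single $U$ on which every summand above is nonpositive, so $\langle \alpha, \dot{\mathbf{x}} \rangle \leq 0$ on $U$. Theorem \ref{bigtheorem} then delivers persistence.

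The main technical hurdle I anticipate is reconciling the neighborhood supplied by Lemma \ref{lemma4} (inside $\mathbb{R}_{>0}^m$) with the one required by Theorem \ref{bigtheorem} (inside $\mathbb{R}_{\geq 0}^m$). This should only require a mild extension: points of $\partial \mathbb{R}_{\geq 0}^m$ near $K$ lie on coordinate hyperplanes $x_\ell = 0$ for some $\ell$, and on such points the monomial inequalities among reactions with reactant support touching $I$ still hold, so the estimates extend by continuity. The case $J = \emptyset$ (covering non-critical semi-locking sets by Theorem \ref{noncritical}) is easier still, since Farkas then yields $\Gamma_I^T \mathbf{y}_I \geq \mathbf{0}$ directly and hence $\langle \alpha, \dot{\mathbf{x}} \rangle \leq 0$ throughout $\mathbb{R}_{\geq 0}^m$ with no neighborhood argument needed.
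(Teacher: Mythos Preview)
Your proposal is correct and essentially identical to the paper's own proof: both invoke Theorem~\ref{bigtheorem}, translate weak dynamical non-emptiability via Lemma~\ref{lemma12} into the failure of condition~1 of Farkas' Lemma for $\tilde{\Gamma}$, partition the resulting strictly positive vector into $I$- and $J$-blocks to define $\alpha$, and then use Lemma~\ref{lemma4} to make $\mathbf{c}_J^T\Gamma_J R(\mathbf{x})\le 0$ near $L_I$. Your observation about the $\mathbb{R}_{>0}^m$ versus $\mathbb{R}_{\geq 0}^m$ discrepancy between Lemma~\ref{lemma4} and Theorem~\ref{bigtheorem} is a point the paper passes over silently, and your explicit handling of the finite intersection of neighborhoods and the $J=\emptyset$ case are refinements rather than departures.
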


\begin{proof}
We know by Theorem \ref{bigtheorem} that a system with bounded solutions is persistent if, for every semi-locking set $I$, there is an $\alpha$ satisfying
\[\alpha = \left\{ \begin{array}{l} \alpha_i < 0, \mbox{ for } i \in I \\ \alpha_i = 0, \mbox{ for } i \not\in I \end{array} \right.\]
so that, for every compact subset $K$ of $L_I$, there exists a neighbourhood $U$ of $K$ in $\mathbb{R}_{\geq 0}^m$ such that $\langle \alpha, \mathbf{f}(\mathbf{x}) \rangle \leq 0$ for all $\mathbf{x} \in U$.

By assumption, every critical semi-locking set is weakly dynamically non-emptiable, which means that there exists an $\epsilon > 0$ and a $J \subseteq \mathcal{R}_I$ such that $\mathcal{C}(I) \cap \mathcal{F}_\epsilon(J) \subseteq$ ker$(I,J,\epsilon)$. By Lemma \ref{lemma12}, this implies that
\[\tilde{\Gamma} \mathbf{v} \leq \mathbf{0}, \mbox{ for } \mathbf{v} \in \mathbb{R}_{\geq 0}^{r} \; \; \; \; \; \Longrightarrow \; \; \; \; \; \mathbf{v} \in \mbox{ker}(\tilde{\Gamma}).\]
It follows that condition \emph{1.} of Lemma \ref{farkaslemma} is not satisfied. Consequently, in order to satisfy \emph{2.}, there must exist a $\mathbf{c} \in \mathbb{R}_{>0}^{n_I + n_J}$ such that $\mathbf{c}^T \tilde{\Gamma} \geq \mathbf{0}^T$.

We partition $\mathbf{c} \in \mathbb{R}_{>0}^{n_I + n_J}$ so that
\[\mathbf{c} = \left[ \begin{array}{c} \mathbf{c}_I \\ \mathbf{c}_J \end{array} \right]\]
 where $\mathbf{c}_I \in \mathbb{R}_{>0}^{n_I}$ and $\mathbf{c}_J \in \mathbb{R}_{>0}^{n_J}$. From this it follows that
\begin{equation}
\label{29321}
\mathbf{c}^T \tilde{\Gamma} = \mathbf{c}_I^T \Gamma_I + \mathbf{c}_J^T \Gamma_J \geq \mathbf{0}^T.
\end{equation}
Multiplying through the right-hand side of (\ref{29321}) by $R(\mathbf{x})$, we have
\begin{equation}
\label{important}\mathbf{c}_I^T \Gamma_I R(\mathbf{x}) + \mathbf{c}_J^T \Gamma_J R(\mathbf{x}) = -\langle \alpha, \mathbf{f}(\mathbf{x}) \rangle + \mathbf{c}_J^T \Gamma_J R(\mathbf{x}) \geq 0
\end{equation}
where $\alpha \in \mathbb{R}_{\leq 0}^m$ is the vector $-\mathbf{c}_I$ extended over the support $I$ and has zeroes elsewhere. Clearly $\alpha$ satisfies
\[\alpha = \left\{ \begin{array}{ll} \alpha_i < 0 \hspace{0.2in} & \mbox{for } i \in I\\ \alpha_i = 0 & \mbox{for } i \not\in I. \end{array} \right.\]

By Lemma \ref{lemma4}, for every compact subset $K$ of $L_I$ and every $\epsilon > 0$, there exists a neighbourhood $U$ of $K$ in $\mathbb{R}_{\geq 0}^m$ such that $\Gamma_J R(\mathbf{x}) \leq \mathbf{0}$. It follows from (\ref{important}) that
\[\langle \alpha, \mathbf{f}(\mathbf{x}) \rangle \leq \mathbf{c}_J^T \Gamma_J R(\mathbf{x}) \leq 0\]
for all $\mathbf{x} \in U$.

Since this holds for every semi-locking set $I$ by assumption, it follows by Theorem \ref{bigtheorem} that the system is persistent.
\end{proof}

There are several points worth emphasizing about Theorem \ref{sontag4} as it contrasts with Theorem \ref{sontag2}. In our result the requirement that the system be conservative has been replaced by the more general assumption that solutions are bounded, and we do not require the assumption that there are no nested critical locking sets. Since a system being conservative implies solutions are bounded, the first is not a significant change; however, we have opened the result to non-conservative systems for which solutions can be bounded by another method, as is the case with complex balanced systems (see Section \ref{complexbalancedsystemssection}). In Section \ref{examplessection} we will see examples where persistence holds despite the systems not being conservative.

We have removed the distinction between critical and non-critical semi-locking sets. We do not need to make this distinction since every non-critical semi-locking set is weakly dynamically nonemptiable by Theorem \ref{noncritical} and therefore trivially included in Theorem \ref{sontag4}.

It is also worth noting that Theorem \ref{sontag4} holds even if the $\alpha_{ij}$ in (\ref{gamma}) differ from the mass-action exponents in (\ref{reactionvector}) provided some regularity conditions hold. Specifically, persistence still applies if, rather than (\ref{reactionvector}), we consider the reaction vector $R(\mathbf{x}) \in \mathbb{R}_{>0}^r$ with entries
\[R_i(\mathbf{x}) = k_i \prod_{j=1}^m x_j^{\tilde{\alpha}_{ij}}\]
where $\tilde{\alpha}_{ij} > 0$ if and only if $\alpha_{ij} > 0$ and the modified feasibility cone $\mathcal{F}_\epsilon(J)$ is redefined to apply to the partial ordering $\mathcal{R}_i \curlyeqprec_I \mathcal{R}_j$ with the $\tilde{\alpha}_{ij}$'s rather than the $\alpha_{ij}$.

\subsection{Complex Balanced Systems}
\label{complexbalancedsystemssection}

Persistence is of particular interest in the study of the \emph{complex balanced systems} first introduced in \cite{F1,H,H-J1}. It is beyond the scope of this paper to develop complex balanced systems in any detail except to note that they are known to have a unique complex balanced equilibrium concentration within each compatibility class and that this concentration is locally asymptotically stable relative to the compatibility class (Lemma 4C and Theorem 6A, \cite{H-J1}).

Despite significant work, however, the following conjecture remains unproven. (This conjecture was first presented in \cite{H1}. We state the conjecture here as it is stated in \cite{C-D-S-S}.)

\begin{proposition}[Global Attractor Conjecture]
\label{globalattractorconjecture}
For any complex balanced system and any starting point $\mathbf{x}_0 \in \mathbb{R}_{>0}^m$, the associated complex balanced equilibrium point $\mathbf{x}^*$ of $\mathsf{C}_{\mathbf{x}_0}$ is a global attractor of $\mathsf{C}_{\mathbf{x}_0}$.
\end{proposition}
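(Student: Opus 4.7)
The plan is to reduce the Global Attractor Conjecture to a persistence statement for complex balanced systems and then deploy Theorem \ref{sontag4} as far as it can go. The conjecture as stated is widely regarded as open, so the honest target of the proposal is to lay out the reduction in full, to see how much of it the machinery of this paper handles outright, and to isolate precisely where the remaining obstruction lies.

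First, I would assemble the classical reduction using \cite{H-J1} and \cite{S-M}. The pseudo-Helmholtz Lyapunov function for complex balanced systems, established in \cite{H-J1}, guarantees that trajectories starting in $\mathsf{C}_{\mathbf{x}_0}$ are bounded in forward time and that $\omega(\mathbf{x}_0)$ consists entirely of equilibria. The result of \cite{S-M} then sharpens this by showing that any $\omega$-limit point lying on $\partial \mathbb{R}_{>0}^m$ must itself be a complex balanced equilibrium of the sub-mechanism supported on its face. Combined with the Horn--Jackson uniqueness and local asymptotic stability statements (Lemma 4C and Theorem 6A of \cite{H-J1}), which pin down a single positive complex balanced equilibrium $\mathbf{x}^*$ in $\mathsf{C}_{\mathbf{x}_0}$, the whole conjecture reduces to the persistence assertion $\omega(\mathbf{x}_0) \cap \partial \mathbb{R}_{>0}^m = \emptyset$.

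Second, I would invoke Theorem \ref{sontag4}. Boundedness of trajectories has already been supplied by the Lyapunov function, so the task becomes to verify that every semi-locking set is weakly dynamically non-emptiable. Two large classes are handled automatically by the preceding sections: Theorem \ref{noncritical} takes care of every non-critical semi-locking set; and because complex balanced systems are necessarily weakly reversible, Theorem \ref{facettheorem} takes care of every semi-locking set whose associated boundary face $F_I$ is a facet of the compatibility class. The case analysis thus collapses onto critical semi-locking sets whose $F_I$ has codimension at least two.

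The hard part, and the reason the conjecture is acknowledged as open, is precisely this residual class. For such $I$ there is no a priori structural guarantee that $\mathcal{C}(I) \cap \mathcal{F}_\epsilon(J) \subseteq \mbox{ker}(I,J,\epsilon)$ for any admissible $J$, and the monomial estimate of Lemma \ref{lemma4} alone does not provide enough control on the reaction rates to rule out convergence to the higher-codimension face. A complete proof would require one of two substantially deeper ingredients: either an induction on the dimension of boundary faces, exploiting the observation from \cite{S-M} that any candidate boundary $\omega$-limit point must complex-balance a strictly smaller network, so that the lower-codimension cases established above seed the higher-codimension ones; or a direct appeal to the algebraic structure of complex balanced equilibria, in the spirit of the toric parametrizations of \cite{C-D-S-S}, to preclude their occurrence on the offending faces outright. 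Rigorously closing this gap is the substantive obstacle; what the present framework does establish is the conjecture unconditionally on the sub-class of complex balanced systems to which Theorem \ref{sontag4} applies, and it reduces the general conjecture to a sharpened persistence question on higher-codimension faces of $\mathsf{C}_{\mathbf{x}_0}$.
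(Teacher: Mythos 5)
The first thing to say is that the paper does not prove this statement at all: despite being typeset as a Proposition, it is the open \emph{Global Attractor Conjecture}, which the paper explicitly introduces with the remark that it ``remains unproven'' and is merely restated from \cite{C-D-S-S}. The paper's actual contribution is the conditional result, Corollary \ref{sontag5}, which affirms the conjecture only for complex balanced systems in which every semi-locking set $I$ either has $F_I$ a facet, a vertex, or empty, or is weakly dynamically non-emptiable. So your decision not to claim a complete proof, and instead to exhibit the reduction and isolate the obstruction, is the correct reading of the situation; there is no proof in the paper that you failed to find.

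Your reduction also tracks the paper's discussion in Section \ref{complexbalancedsystemssection} closely: boundedness plus Theorem 3.2 of \cite{S-M} reduces the conjecture to persistence, i.e.\ to $\omega(\mathbf{x}_0) \cap L_I = \emptyset$ for all semi-locking sets $I$ (via Lemma \ref{lemma461}), after which Theorems \ref{noncritical}, \ref{facettheorem}, and \ref{sontag4} dispose of non-critical sets and facets. Two points of comparison with Corollary \ref{sontag5}. First, the paper covers one case your dichotomy misses: vertices. The paper stresses that no analogue of Theorem \ref{facettheorem} holds for vertices (the origin in Example 2 is a vertex that is \emph{not} weakly dynamically non-emptiable), so they cannot be folded into Theorem \ref{sontag4}; instead the paper excludes them by Proposition 20 of \cite{C-D-S-S}, a result special to complex balanced equilibria, together with Lemma \ref{facelemma} to pass from a single $F_I$ to all of $L_I$. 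Your residual class of critical semi-locking sets whose faces have codimension at least two should therefore be trimmed by this vertex case in order to reproduce the paper's partial result exactly. Second, a minor attribution point: in the proof of Corollary \ref{sontag5} the facet case is actually handled via Corollary 3.3 of \cite{A-S} and a boundary contradiction rather than via weak dynamical non-emptiability, although Theorem \ref{facettheorem} makes your route equally valid. With the vertex case added, your write-up is an accurate account of everything the paper establishes toward the conjecture, and of exactly where it stops.
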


Although no general proof is known, many limitations on solutions not tending toward $\mathbf{x}^*$ have been found. Theorem 3.2 of \cite{S-M} guarantees that the $\omega$-limit set of a complex balanced system consists either of the unique positive equilibrium $\mathbf{x}^*$ in $\mathsf{C}_{\mathbf{x}_0}$ or of complex balanced equilibria lying on $\partial \mathbb{R}_{>0}^m$; consequently, persistence of complex balanced systems suffices to affirm Proposition \ref{globalattractorconjecture}. Furthermore, since $\partial \mathbb{R}_{>0}^m$ decomposes into the sets $L_I$ and $\omega(\mathbf{x}_0) \cap L_I \not= \emptyset$ for $\mathbf{x}_0 \in \mathbb{R}_{>0}^m$ implies $I$ is a semi-locking set by Lemma \ref{lemma461}, it follows that we need only prove $\omega(\mathbf{x}_0) \cap L_I = \emptyset$ for the sets $L_I$ corresponding to semi-locking sets.

Most recent research on the \emph{Global Attractor Conjecture} has made use of these restrictions (\cite{A,A-S,A3,C-D-S-S,S-J}). For a summary of the major results of this research to date, see the discussion preceding Theorem 4.6 of \cite{A-S}. We will append to this result the implications of Theorem \ref{sontag4}; however, we begin with a Lemma. (In the following we let $F_I$ be associated with the initial condition $\mathbf{x}_0 \in \mathbb{R}_{>0}^m$ and $\tilde{F}_I$ be associated with $\tilde{\mathbf{x}}_0 \in \mathbb{R}_{>0}^m$.)

\begin{lemma}
\label{facelemma}
Consider a chemical reaction network. Consider a set $I \subseteq \left\{ 1, \ldots, m \right\}$ and suppose that $\tilde{F}_I \not= \emptyset$ for some $\tilde{\mathbf{x}}_0 \in \mathbb{R}_{>0}^m$. Then, for every $\mathbf{x}_0 \in \mathbb{R}_{>0}^m$, either dim$(F_I)=$dim$(\tilde{F}_I)$ or $F_I=\emptyset$. Furthermore, for every $\mathbf{x} \in L_I$, there exists $\mathbf{x}_0 \in \mathbb{R}_{>0}^m$ such that $\mathbf{x} \in F_I$.
\end{lemma}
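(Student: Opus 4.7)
The plan is to split the two claims apart and handle each by exploiting the fact that $F_I$, when nonempty, is a relatively open subset of the affine slice $(\mathbf{x}_0+S)\cap H_I$, where $H_I = \{\mathbf{x}\in\mathbb{R}^m : x_i = 0 \text{ for all } i\in I\}$ is the coordinate subspace vanishing on $I$. For the dimension statement, I would first argue that $\overline{\mathsf{C}}_{\mathbf{x}_0} = (\mathbf{x}_0+S)\cap\mathbb{R}^m_{\geq 0}$ (since any point of the right-hand side can be approached along the segment to $\mathbf{x}_0\in\mathbb{R}^m_{>0}$). Then $F_I = (\mathbf{x}_0+S)\cap H_I \cap \{x_i>0 : i\notin I\}$, and the last factor is an open condition. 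Hence, whenever $F_I$ is nonempty, it is a nonempty open subset of the affine space $(\mathbf{x}_0+S)\cap H_I$, whose dimension (when nonempty) equals $\dim(S\cap H_I)$. Since this number depends only on $S$ and $I$, not on $\mathbf{x}_0$, we conclude that whenever both $F_I$ and $\tilde F_I$ are nonempty they share the common dimension $\dim(S\cap H_I)$.

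For the second part, given $\mathbf{x}\in L_I$ and a witness $\tilde{\mathbf{y}}\in\tilde F_I \subseteq \tilde{\mathbf{x}}_0 + S$, I would define
\[
\mathbf{x}_0 \;=\; \mathbf{x} + \lambda(\tilde{\mathbf{x}}_0 - \tilde{\mathbf{y}})
\]
for a parameter $\lambda>0$ to be chosen small. By construction $\mathbf{x}_0 - \mathbf{x} \in S$, so $\mathbf{x}\in \mathbf{x}_0 + S$. Checking positivity coordinatewise: for $i\in I$, both $x_i$ and $\tilde y_i$ vanish, so $[\mathbf{x}_0]_i = \lambda\,\tilde x_{0,i} > 0$ for every $\lambda>0$; for $i\notin I$ we have $x_i>0$, and $[\mathbf{x}_0]_i = x_i + \lambda(\tilde x_{0,i}-\tilde y_i)$ remains positive provided $\lambda$ is small enough (the only binding indices are those for which $\tilde y_i > \tilde x_{0,i}$, giving a finite upper bound on $\lambda$). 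Fixing any such $\lambda$ produces $\mathbf{x}_0\in\mathbb{R}^m_{>0}$ with $\mathbf{x}\in\mathbf{x}_0+S$.

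The subtle step I expect to matter is confirming that $\mathbf{x}\in F_I$, not merely $\mathbf{x}\in (\mathbf{x}_0+S)\cap L_I$, since $F_I$ requires membership in $\overline{\mathsf{C}}_{\mathbf{x}_0}$. This is handled by exhibiting the explicit approximating sequence $\mathbf{x}_n := \tfrac{1}{n}\mathbf{x}_0 + (1-\tfrac{1}{n})\mathbf{x}$; each $\mathbf{x}_n$ lies in $\mathbf{x}_0+S$ (the displacement is a scalar multiple of $\mathbf{x}-\mathbf{x}_0\in S$), has strictly positive entries on $I$ (because the $\mathbf{x}_0$-contribution is positive) and on the complement of $I$ (a convex combination of strictly positive numbers), so $\mathbf{x}_n\in\mathsf{C}_{\mathbf{x}_0}$ and $\mathbf{x}_n\to\mathbf{x}$. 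Together with the coordinate signs of $\mathbf{x}$, this places $\mathbf{x}$ in $F_I$ and completes the lemma.
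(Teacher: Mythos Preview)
Your proof is correct and follows essentially the same line as the paper's: both arguments identify $F_I$ (when nonempty) as a relatively open subset of the affine slice $(\mathbf{x}_0+S)\cap H_I$, whose dimension is independent of $\mathbf{x}_0$, and both construct $\mathbf{x}_0$ for the second claim by perturbing $\mathbf{x}$ along the vector $\tilde{\mathbf{x}}_0-\tilde{\mathbf{y}}\in S$. Your version is slightly more explicit in two places---you name the common dimension as $\dim(S\cap H_I)$ and you append a sequence argument to verify $\mathbf{x}\in\overline{\mathsf{C}}_{\mathbf{x}_0}$---but the latter is already covered by your earlier observation that $\overline{\mathsf{C}}_{\mathbf{x}_0}=(\mathbf{x}_0+S)\cap\mathbb{R}^m_{\ge 0}$ (which the paper states without proof after Definition~\ref{face}), so the extra step is redundant though harmless.
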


\begin{proof}
Consider the set $\tilde{L}_I = \left\{ \mathbf{x} \in \mathbb{R} \; | \; x_i = 0 \mbox{ if } i \in I \right\}$. It is clear that $L_I$ is relatively interior to $\tilde{L}_I$, i.e. $\forall$ $\mathbf{x} \in L_I$, $\exists$ $\epsilon > 0$ such that $B_\epsilon (\mathbf{x}) \cap \tilde{L}_I \subseteq L_I$. ($B_\epsilon(\mathbf{x})$ is the standard Euclidean ball of radius $\epsilon$ centered at $\mathbf{x}$.) Now consider the affine space $(\mathbf{x}_0+S) \cap \tilde{L}_I$ and suppose $F_I = (\mathbf{x}_0 + S) \cap L_I \not= \emptyset$. Then, $\forall$ $\mathbf{x} \in F_I$, $\exists$ $\epsilon > 0$ such that $B_\epsilon(\mathbf{x}) \cap [(\mathbf{x}_0 + S) \cap \tilde{L}_I] = (\mathbf{x}_0 + S) \cap [ B_\epsilon(\mathbf{x}) \cap \tilde{L}_I ] \subseteq (\mathbf{x}_0 + S) \cap L_I = F_I$. Consequently, $F_I$ is relatively interior to $(\mathbf{x}_0 + S) \cap \tilde{L}_I$. Since the dimension of $(\mathbf{x}_0 + S) \cap \tilde{L}_I$ is the same for all $\mathbf{x}_0 \in \mathbb{R}_{>0}^m$, it follows that dim$(F_I)$ is the same for all $\mathbf{x}_0 \in \mathbb{R}_{>0}^m$ so long as $F_I \not= \emptyset$. This proves the first claim.

Since $\tilde{F}_I \not= \emptyset$ by assumption, we can consider an arbitrary $\tilde{\mathbf{x}} \in \tilde{F}_I$. By definition, we have that $(\tilde{\mathbf{x}}_0 - \tilde{\mathbf{x}})_i > 0$ for $i \in I$ and $\tilde{\mathbf{x}}_0 - \tilde{\mathbf{x}} \in S$. Now choose an arbitrary $\mathbf{x} \in L_I$. It follows from the definition of $L_I$ that $\mathbf{x}_0 = \mathbf{x} + \epsilon (\tilde{\mathbf{x}}_0 - \tilde{\mathbf{x}}) \in \mathbb{R}_{>0}^m$ for $\epsilon > 0$ sufficiently small. Since $\mathbf{x} \in F_I$ for $\mathbf{x}_0 \in \mathbb{R}_{>0}^m$ and $\mathbf{x} \in L_I$ was chosen arbitrarily, the second claim follows.
\end{proof}

This result guarantees that if $\tilde{F}_I$ is a facet (or vertex) for some $\tilde{\mathbf{x}}_0 \in \mathbb{R}_{>0}^m$, then $F_I$ is a facet (or vertex) for any $\mathbf{x}_0 \in \mathbb{R}_{>0}^m$ so long as $F_I \not= \emptyset$. Furthermore, it guarantees that $L_I$ can be completely partitioned into sets $F_I$ corresponding to facets (or vertices).

We are now prepared to prove the following application of Theorem \ref{sontag4} to complex balanced systems. It should be noted that, while facets are weakly dynamically non-emptiable by Theorem \ref{facettheorem}, no comparable result holds for vertices (consider the origin in Example 2 of Section \ref{examplessection}). Consequently, the following result cannot be attained as a simple application of Theorem \ref{sontag4}.

\begin{corollary}
\label{sontag5}
Consider a complex balanced mass-action system. Suppose that every set $F_I$ corresponding to a semi-locking set $I$ is either a facet, a vertex, or empty, or that $I$ is weakly dynamically non-emptiable. Then the \emph{Global Attractor Conjecture} holds for this system.
\end{corollary}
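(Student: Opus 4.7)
The plan is to establish persistence of the complex balanced mass-action system, from which the Global Attractor Conjecture follows: by Theorem 3.2 of \cite{S-M}, the $\omega$-limit set of any trajectory starting in $\mathbb{R}^m_{>0}$ consists of either the unique positive equilibrium $\mathbf{x}^* \in \mathsf{C}_{\mathbf{x}_0}$ or of complex balanced equilibria on $\partial \mathbb{R}_{>0}^m$, and excluding the latter via persistence together with the local asymptotic stability of $\mathbf{x}^*$ established in \cite{H-J1} forces $\omega(\mathbf{x}_0) = \{\mathbf{x}^*\}$. The boundedness hypothesis needed by Theorem \ref{sontag4} is automatic here, because the Horn-Jackson entropy-type Lyapunov function is proper on each $\overline{\mathsf{C}}_{\mathbf{x}_0}$ and non-increasing along trajectories.

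By Lemma \ref{lemma461} I only need $\omega(\mathbf{x}_0) \cap L_I = \emptyset$ for each semi-locking set $I$. Each such $I$ falls into one of the four hypothesised cases, three of which I can dispatch uniformly. If $F_I = \emptyset$, then $\omega(\mathbf{x}_0) \cap L_I \subseteq \overline{\mathsf{C}}_{\mathbf{x}_0} \cap L_I = F_I = \emptyset$. If $F_I$ is a facet, then since complex balanced systems are weakly reversible, Theorem \ref{facettheorem} promotes $I$ to being weakly dynamically non-emptiable. For every weakly dynamically non-emptiable $I$, I would localize the proof of Theorem \ref{sontag4}: Farkas' Lemma applied to the corresponding matrix $\tilde{\Gamma}$ furnishes a vector $\alpha \in \mathbb{R}^m$ with $\alpha_i < 0$ for $i \in I$ and $\alpha_i = 0$ for $i \notin I$ such that $\langle \alpha, \mathbf{f}(\mathbf{x}) \rangle \leq 0$ on a neighbourhood of each compact subset of $L_I$; Theorem \ref{bigtheorem}, applied to this particular $L_I$, then rules out $\omega$-limit points there.

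The main obstacle is the vertex case, which the remark preceding the corollary explicitly flags as not subsumed by weak dynamical non-emptiability (the origin in Example 2 being the archetype). For this case I plan to exploit Lemma \ref{facelemma}: if $F_I$ is a vertex for some $\tilde{\mathbf{x}}_0$, then for every $\mathbf{x}_0 \in \mathbb{R}^m_{>0}$ either $F_I = \emptyset$ or $F_I$ is itself a single vertex, and moreover every point of $L_I$ occurs as such a vertex for some choice of initial condition. Consequently $L_I$ consists of isolated boundary points which, if they lie in $\omega(\mathbf{x}_0)$, must by Theorem 3.2 of \cite{S-M} be complex balanced equilibria. I would then follow the argument of \cite{A-S}, Theorem 4.6, invoking the connectedness and invariance of $\omega$-limit sets of bounded trajectories together with the strict descent of the Horn-Jackson Lyapunov function off the equilibrium set, to contradict any such vertex appearing in $\omega(\mathbf{x}_0)$. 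Stitching the vertex case together with the cases of the previous paragraph establishes persistence on every semi-locking $L_I$ simultaneously, and hence the Global Attractor Conjecture for the system.
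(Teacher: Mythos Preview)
Your overall structure matches the paper's: reduce the Global Attractor Conjecture to persistence via Theorem~3.2 of \cite{S-M}, then rule out $\omega$-limit points on each $L_I$ with $I$ a semi-locking set. Two routes differ. You absorb the facet case into the weakly dynamically non-emptiable case via Theorem~\ref{facettheorem}; the paper instead keeps facets separate and invokes Corollary~3.3 of \cite{A-S} (an $\omega$-limit point on a facet forces one on $\partial F_I$, which is a lower-dimensional face handled elsewhere). For vertices you sketch a connectedness/Lyapunov argument in the style of \cite{A-S}; the paper simply cites Proposition~20 of \cite{C-D-S-S}, which already packages exactly that argument. One wording slip: $L_I$ is the relative interior of a face of $\mathbb{R}^m_{\geq 0}$ and is connected, not a set of isolated points; what you need is that $F_I=\overline{\mathsf{C}}_{\mathbf{x}_0}\cap L_I$ is a single point for each fixed $\mathbf{x}_0$.

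There is, however, one genuine gap. You invoke Theorem~\ref{bigtheorem} ``applied to this particular $L_I$'' to exclude $\omega$-limit points one face at a time. But Theorem~\ref{bigtheorem} as stated is a global result: its hypothesis demands the $\alpha$-condition for \emph{every} semi-locking set, and its conclusion is persistence of the whole system, not $\omega(\mathbf{x}_0)\cap L_I=\emptyset$ for a single $I$. Since vertex semi-locking sets need not be weakly dynamically non-emptiable (as the text preceding the corollary emphasises), you cannot verify the global hypothesis, so the theorem does not apply as a black box. The paper negotiates this by explicitly unpacking the inductive structure of the proof in \cite{S-J}, running the induction from $|I|=m$ down to $|I|=1$: at each stage all strictly larger $\tilde I\supsetneq I$ have already been disposed of (by the vertex/empty/facet arguments or by an earlier inductive step), and the $\alpha$-inequality for the current weakly dynamically non-emptiable $I$ then rules out $\omega$-limit points on $L_I$. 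Your final ``stitching'' sentence needs precisely this inductive bookkeeping to be justified.
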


\begin{proof}
For complex balanced systems, $\omega(\mathbf{x}_0) \cap F_I = \emptyset$ for every $F_I$ corresponding to a vertex (Proposition 20 of \cite{C-D-S-S}) or the empty set (trivially). Also, from Corollary 3.3 of \cite{A-S}, we have that $\omega(\mathbf{x}_0) \cap F_I \not= \emptyset$ implies $\omega(\mathbf{x}_0) \cap \partial F_I \not= \emptyset$ for all $F_I$ corresponding to facets; however, since $\partial F_I$ corresponds to some $F_{\tilde{I}}$ not corresponding to a facet, this is a contradiction. It follows from Lemma \ref{facelemma} that $\omega(\mathbf{x}_0) \cap L_I = \emptyset$ for any semi-locking set $I$ such that $F_I$ is a facet or a vertex for some $\mathbf{x}_0 \in \mathbb{R}_{>0}^m$. It remains to show that $\omega(\mathbf{x}_0) \cap L_I = \emptyset$ for every $L_I$ corresponding to a weakly dynamically non-emptiable semi-locking set $I$.

By Theorem \ref{sontag4} we know that for each semi-locking set $I$ which is weakly dynamically non-emptiable, there is an $\alpha$ satisfying
\[\alpha = \left\{ \begin{array}{l} \alpha_i < 0, \mbox{ for } i \in I \\ \alpha_i = 0, \mbox{ for } i \not\in I \end{array} \right.\]
so that, for every compact subset $K$ of $L_I$, there exists a neighbourhood $U$ of $K$ in $\mathbb{R}_{\geq 0}^m$ such that $\langle \alpha, \mathbf{f}(\mathbf{x}) \rangle \leq 0$ for all $\mathbf{x} \in U$. Since complex balanced systems are bounded, we are justified in using the inductive hypothesis of Theorem 3.3 of \cite{S-J} from $|I| = m$ to $|I| = 1$ to conclude that, for all $\mathbf{x}_0 \in \mathbb{R}_{>0}^m$, $\omega(\mathbf{x}_0) \cap \partial \mathbb{R}_{>0}^m = \emptyset$. It follows by Theorem 3.2 of \cite{S-M} that the \emph{Global Attractor Conjecture} holds for trajectories of such a system, and we are done.
\end{proof}

\section{Examples}
\label{examplessection}

In this section, we present three chemical reaction systems which illustrate how Theorem \ref{facettheorem}, Theorem \ref{sontag4}, and Corollary \ref{sontag5} work and what their limitations are.

The first two are examples of non-conservative systems where we exploit the results of Section 3. We show that the first example contains a semi-locking set which is weakly dynamically non-emptiable but not dynamically non-emptiable in the sense introduced in \cite{A3}. The third is an example of a system which does not fall within the bounds of the results discussed in this paper or any other papers of which the authors are aware.

All of the systems considered in this section are complex balanced for all sets of rate constants and consequently fall within the scope of the discussion in Section \ref{complexbalancedsystemssection}. Since the semi-locking set $I=\left\{ 1, \ldots, m \right\}$ corresponds to $F_I = \left\{ \mathbf{0} \right\}$, which is always a vertex of any compatibility class it is in, we may exclude it when considering complex balanced systems, since no trajectory may approach it. We will call any semi-locking set $I=\left\{ 1, \ldots, m \right\}$ \emph{trivial}. (For further discussion of sufficient conditions to determine complex balancing, see \cite{F1} and \cite{H}.)\\

\textbf{Example 1:} Consider the mass-action system
\[\begin{array}{lll} \mathcal{A}_1 \; \stackrel{k_1}{\longrightarrow} \; 2 \mathcal{A}_1 + \mathcal{A}_2 \\ \; {}_{k_3} \nwarrow \; \; \; \; \; \; \swarrow {}_{k_2} \\ \; \; \; \; \; \mathcal{A}_1 + \mathcal{A}_2. \end{array}\]
For this system, we have
\[\Gamma = \left[ \begin{array}{rrr} 1 & -1 & 0 \\ 1 & 0 & -1 \end{array} \right] \hspace{0.2in} \mbox{ and } \hspace{0.2in} R(\mathbf{x}) = \left[ \begin{array}{c} k_1 x_1 \\ k_2 x_1^2 x_2 \\ k_3 x_1 x_2 \end{array} \right]\]
and the system is governed by $\dot{\mathbf{x}} = \Gamma R(\mathbf{x})$.

We notice first of all that the system is not conservative and therefore does not fall within the scope of the systems considered in \cite{A3}. We might still be tempted to ask whether the system has semi-locking sets which are dynamically non-emptiable, so we consider the semi-locking set $I = \left\{ 1 \right\}$. Relative to this set, we have $\mathcal{R}_2 \curlyeqprec_{I} \mathcal{R}_1$ and $\mathcal{R}_2 \curlyeqprec_{I} \mathcal{R}_3$ so that $\mathcal{F}_\epsilon(I) \cap \mathcal{C}(I) = \left\{ \mathbf{0} \right\}$ corresponds to finding a $\mathbf{v} \in \mathbb{R}_{\geq 0}^3$ such that $v_1 - v_2 \leq 0$, $v_2 \leq \epsilon v_1$, and $v_2 \leq \epsilon v_3$. This can clearly be satisfied for any $\mathbf{v} = \left[ 0 \; \; 0 \; \; v_3 \right]^T$ where $v_3 \geq 0$. Since $\mathcal{F}_\epsilon(I) \cap \mathcal{C}(I) \not= \left\{ \mathbf{0} \right\}$, it follows that the system contains a critical semi-locking set which is not dynamically non-emptiable.

We notice, however, that $F_I$ is a facet of $\mathsf{C}_{\mathbf{x}_0}=\mathbb{R}_{>0}^2$ since $s=2$ and dim$(F_I)=1$. It follows from Theorem \ref{facettheorem} that $I$ is weakly dynamically non-emptiable. Since the system is complex balanced for all sets of rate constants and $I$ is the only non-trivial semi-locking set, the \emph{Global Attractor Conjecture} holds for this system by Corollary \ref{sontag5}. (This result could also be attained by application of Theorem 4.6 of \cite{A-S}, although it should be pointed out that $F_I$ is an example of a facet which is not dynamically non-emptiable in the traditional sense so that Corollary 3.5 of the same paper cannot be applied.)\\

\textbf{Example 2:} Consider the system
\[\begin{array}{c} \mathcal{A}_1 \; \; \displaystyle{\stackrel{k_5}{\mathop{\leftrightarrows}_{k_1}}} \; \; 2\mathcal{A}_2 \\ {}^{k_4} \uparrow \hspace{0.5in} \downarrow {}_{k_2} \\ \mathcal{A}_2 + \mathcal{A}_3 \; \stackrel{k_3}{\leftarrow} \; \mathcal{A}_1 + \mathcal{A}_2. \end{array}\]
The system is governed by the dynamics $\dot{\mathbf{x}} = \Gamma R(\mathbf{x})$ where
\[\Gamma = \left[ \begin{array}{rrrrr} -1 & 1 & -1 & 1 & 1 \\ 2 & -1 & 0 & -1 & -2 \\ 0 & 0 & 1 & -1 & 0 \end{array} \right] \hspace{0.2in} \mbox{ and } \hspace{0.2in} R(\mathbf{x}) = \left[ \begin{array}{c} k_1 x_1 \\ k_2 x_2^2 \\ k_3 x_1 x_2 \\ k_4 x_2 x_3 \\ k_5 x_2^2 \end{array} \right].\]

This example was first considered in \cite{S-J}, where the authors showed that the system is non-conservative, complex balanced for all sets of rate constants, and has only the non-trivial semi-locking set $I = \left\{ 1, 2 \right\}$. By the methodology presented in that paper, however, they could not find an $\alpha$ corresponding to $I$ satisfying (\ref{condition1}) and (\ref{condition2}). Since the system is not conservative, the results of \cite{A3} cannot be applied, and since $I$ is not a facet, the results of \cite{A-S} cannot be applied. Here we will show that such an $\alpha$ does in fact exist by showing that $I$ is weakly dynamically non-emptiable.

We have that
\[\Gamma_{I} = \left[ \begin{array}{rrrrr} -1 & 1 & -1 & 1 & 1 \\ 2 & -1 & 0 & -1 & -2 \end{array} \right].\]
We have $\mathcal{R}_2 \curlyeqprec_{I} \mathcal{R}_4$, $\mathcal{R}_3 \curlyeqprec_{I} \mathcal{R}_1$, $\mathcal{R}_3 \curlyeqprec_{I} \mathcal{R}_4$, and $\mathcal{R}_5 \curlyeqprec_{I} \mathcal{R}_4$ so that
\begin{equation}
\label{J2}
\mathcal{R}_I = \left\{ (2,4), (3,1), (3,4), (5,4) \right\}.
\end{equation}
We pick the subset $J = \left\{ (3,4) \right\}$ so that
\[\Gamma_J = \left[ \begin{array}{rrrrr} 0 & 0 & 1 & -\epsilon & 0 \end{array} \right]. \]

The condition $\tilde{\Gamma} \mathbf{v} \leq \mathbf{0}$ for $\mathbf{v} \in \mathbb{R}_{\geq 0}^5$ is equivalent to the system $-v_1+v_2-v_3+v_4+v_5 \leq 0$, $2v_1-v_2-v_4-2v_5 \leq 0$, and $v_3-\epsilon v_4 \leq 0$ for $v_i \geq 0$, $i=1, \ldots, 5$. Taking a positive linear combination of these conditions yields $v_2 + (1-2\epsilon)v_4 \leq 0$. For $0 < \epsilon < 1/2$, this can be satisfied for $v_2 \geq 0$ and $v_4 \geq 0$ if and only if $v_2=v_4=0$. It then follows from the third condition that $v_3=0$. The remaining conditions can be satisfied so long as $v_1 = v_5 \geq 0$ so that
\[\mathbf{v} \in \mbox{span} \left\{ \left[ \begin{array}{ccccc} 1 & 0 & 0 & 0 & 1 \end{array} \right]^T \right\} \subseteq \mbox{ker}(\tilde{\Gamma}).\]
By Lemma \ref{lemma12}, the semi-locking set $I$ is weakly dynamically non-emptiable. Since trajectories are bounded by virtue of the system being complex balanced, it follows from Theorem \ref{sontag4} that the system is persistent and from Corollary \ref{sontag5} that it satisfies Proposition \ref{globalattractorconjecture}.

In order to illustrate how the machinery of this result really works, we will complete the analysis for $I$ up to the point of applying Theorem \ref{bigtheorem}. From Lemma \ref{farkaslemma} we have that there exists a $\mathbf{c} \in \mathbb{R}_{>0}^3$ such that $\mathbf{c}^T \tilde{\Gamma} \geq \mathbf{0}^T$; in fact, we can find it explicitly. This is satisfied if we choose $c_1=2$, $c_2=1$, $c_3=2$, and $0 < \epsilon < 1$, for which values we have
\[\begin{split} \mathbf{c}^T \tilde{\Gamma} R(\mathbf{x}) & = \left[ \begin{array}{rrr} 2 & 1 & 2 \end{array} \right] \left[ \begin{array}{rrrrr} -1 & 1 & -1 & 1 & 1 \\ 2 & -1 & 0 & -1 & -2 \\ 0 & 0 & 1 & -\epsilon & 0 \end{array} \right] \left[ \begin{array}{c} k_1 x_1 \\ k_2 x_2^2 \\ k_3 x_1 x_2 \\ k_4 x_2 x_3 \\ k_5 x_2^2 \end{array} \right] \\ & = -\alpha^T \dot{\mathbf{x}} + 2(k_3 x_1 x_2 - \epsilon k_4 x_2 x_3) \geq 0 \end{split}\]
where $\alpha = [-2 \; \; -1  \; \; 0]^T$. It follows that $\alpha^T \dot{\mathbf{x}} \leq 2(k_3 x_1 x_2 - \epsilon k_4 x_2 x_3) \leq 0$ in a neighbourhood of any compact subset of $F_{I}$ since $k_3 x_1 x_2 \leq \epsilon k_4 x_2 x_3$ under the same conditions by Lemma \ref{lemma4}. This is exactly the condition which was expected for application of Theorem \ref{bigtheorem}, which completes the connection with Theorem \ref{sontag4}.

It is worth reemphasizing that not all sets $J$ satisfying (\ref{J2}) are sufficient to show that $I$ is weakly dynamically non-emptiable. For instance, if we had selected $\tilde{J} = \left\{ (2,4), (3,1), (3,4) \right\}$, we would have had
\[\Gamma_{\tilde{J}} = \left[ \begin{array}{rrrrr} 0 & 1 & 0 & -\epsilon & 0 \\ -\epsilon & 0 & 1 & 0 & 0 \\ 0 & 0 & 1 & -\epsilon & 0 \end{array} \right].\]
In this case, we can satisfy $\tilde{\Gamma} \mathbf{v} \leq \mathbf{0}$ by choosing
\[\mathbf{v} \in \mbox{span} \left\{ \left[ \begin{array}{rrrrr} 1 & 0 & 0 & 0 & 1 \end{array} \right]^T \right\}\]
but ker$(\tilde{\Gamma}) = \left\{ \mathbf{0} \right\}$. Consequently, $\tilde{J}$ is insufficient to show that $I$ is weakly dynamically non-emptiable.

It is also worth noting that $J$ is not the only choice sufficient for showing $I$ is weakly dynamically non-emptiable. In fact, the maximal set $\tilde{J} = \mathcal{R}_I$ works with ker$(\tilde{\Gamma}) = \left\{ \mathbf{0} \right\}$. (In other words, $I$ is dynamically non-emptiable in the sense introduced in \cite{A3}! We remain unable to use Theorem 4 of \cite{A3}, however, because this system is not conservative.) We can see also that it is easier to demonstrate weak dynamical non-emptiability with some choices of $J$ than with others, an advantage which would become even more pronounced for larger systems.\\

\textbf{Example 3:} Now consider the system
\[\begin{array}{c} \mathcal{A}_1 + \mathcal{A}_2 \; \; \stackrel{k_1}{\rightarrow} \; \; 3\mathcal{A}_1 \\ {}^{k_4} \uparrow \; \; \; \; \; \; \; \; \; \; \; \; \; \; \; \; \; \; \; \; \; \; \downarrow {}_{k_2} \\ 2\mathcal{A}_2 \; \; \displaystyle{\mathop{\leftarrow}_{k_3}} \; \; 2\mathcal{A}_1 + \mathcal{A}_3. \end{array}\]
The system is governed by the dynamics $\dot{\mathbf{x}} = \Gamma R(\mathbf{x})$ where
\[\Gamma = \left[ \begin{array}{rrrr} 2 & -1 & -2 & 1 \\ -1 & 0 & 2 & -1 \\ 0 & 1 & -1 & 0 \end{array} \right] \hspace{0.2in} \mbox{ and } \hspace{0.2in} R(\mathbf{x}) = \left[ \begin{array}{c} k_1 x_1 x_2 \\ k_2 x_1^3 \\ k_3 x_1^2 x_3 \\ k_4 x_2^2 \end{array} \right].\]
The system is non-conservative, complex balanced for all sets of rate constants, and has only the non-trivial semi-locking set $I = \left\{ 1, 2 \right\}$. The system is not conservative, so the results of \cite{A3} cannot be applied, and $I$ is not a facet, so the results of \cite{A-S} cannot be applied. We consider whether $I$ is weakly dynamically non-emptiable.

We have only the condition $\mathcal{R}_2 \curlyeqprec_{I} \mathcal{R}_3$ so that $J \subseteq \left\{ (2,3) \right\}$. Choosing the maximal such set we have\[\tilde{\Gamma} = \left[ \begin{array}{rrrr} 2 & -1 & -2 & 1 \\ -1 & 0 & 2 & -1 \\ 0 & 1 & -\epsilon & 0 \end{array} \right].\]
It is clear that $\mathbf{v} = \left[ 0 \; \; 0 \; \; 1 \; \; 2 \right]^T$ satisfies $\tilde{\Gamma} \mathbf{v} \leq 0$ but
\[\mathbf{v} \not\in\mbox{ ker}(\tilde{\Gamma}) =\mbox{ span}\left\{ \left[ - \epsilon \; \; - \epsilon \; \; -1 \; \;  -2 + \epsilon \right]^T \right\}\]
for any $\epsilon > 0$. Since the condition $\tilde{\Gamma} \mathbf{v} \leq 0$ for $\mathbf{v} \in \mathbb{R}_{\geq 0}^r$ does not imply $\mathbf{v} \in$ ker$(\tilde{\Gamma})$ for the trivial set $J = \emptyset$ either, it follows that $I$ is not weakly dynamically non-emptiable and thus the results of this paper cannot be applied.

The only other approach that we know of to handle such a situation is Corollary 3.15 of \cite{S-J}. It can be checked, however, that there are eight \emph{strata} which intersect $L_I$ and that the corresponding vectors $\sum_{j=1}^k \mathbf{s}_{\mu_i(j)}$, $i=1, \ldots, \delta$, $k=1, \ldots, l_i-1$, do not have a common $\alpha$ satisfying (\ref{condition1}) and (\ref{condition2}) such that either Condition 1 or Condition 2 is satisfied. We submit, therefore, that this is an example of a system whose persistence lies beyond the scope of known theory.

\addtocontents{toc}{\protect\contentsline{chapter}{\vspace{0.15in} \textbf{Bibliography}}{}}

\end{document}